\title{Models of Bounded Arithmetic and variants of Pigeonhole Principle}
\author{Mykyta Narusevych\footnote{Supported by the Charles University project PRIMUS/21/SCI/014, Charles University Research Centre program No. UNCE/24/SCI/022 and GA UK project No. 246223}}
\date{Faculty of Mathematics and Physics \\
Charles University\footnote{Sokolovská 83, Prague, 186 75, The Czech Republic}}
\begin{document}

\newcommand{\TR}{T^1_2(R)}
\newcommand{\DR}{\Delta^b_1(R)}
\newcommand{\PHPR}[2]{PHP^{#1}_{#2}(R)}
\newcommand{\PHPD}[2]{PHP^{#1}_{#2}(\DR)}
\newcommand{\OPHPR}[1][n]{ontoPHP^{#1+1}_{#1}(R)}
\newcommandx{\WPHPR}[2][1=m, 2=R]{WPHP^{2#1}_{#1}(#2)}
\newcommand{\WPHPD}[1][m]{WPHP^{2#1}_{#1}(\DR)}

\newcommand{\M}{\mathbb{M}}
\newcommand{\I}{\mathbb{I}}
\renewcommand{\P}{\mathbb{P}}
\newcommand{\Pdef}{dense and $\P$-definable }

\newcommand{\WAs}[2]{$A^{#2}_{#1, \sigma}$-array }
\newcommand{\WAt}[2]{$A^{#2}_{#1, \tau}$-array }
\newcommand{\WAr}[2]{$A^{#2}_{#1, \rho}$-array }

\newcommand{\PHPTB}[2]{PHP^{#1}_{#2}\text{-tree} }
\newcommand{\PHPTU}[1]{PHP^{#1}\text{-tree} }
\newcommand{\PHPT}{PHP\text{-tree} }

\theoremstyle{definition}
\newtheorem{theorem}{Theorem}[section]
\newtheorem{definition}[theorem]{Definition}
\newtheorem{lemma}[theorem]{Lemma}
\newtheorem{corollary}[theorem]{Corollary}
\newtheorem{proposition}[theorem]{Proposition}
\newtheorem*{remark}{Remark}
\newtheorem*{conjecture}{Conjecture}
\newtheorem*{problem}{Problem}
\newtheorem*{fact}{Fact}

\maketitle

\begin{abstract}
    We give elementary proof that theory $\TR$ augmented by the weak pigeonhole principle for all $\DR$-definable relations does not prove the bijective pigeonhole principle for $R$. This can be derived from known more general results but our proof yields a model of $\TR$ in which $\OPHPR$ fails for some nonstandard element $n$ while $PHP^{m+1}_m$ holds for all $\DR$-definable relations and all $m \leq n^{1-\epsilon}$, where $\epsilon > 0$ is a fixed standard rational parameter. This can be seen as a step towards solving an open question posed by M. Ajtai in \cite[Page~3]{ajtai90}.
\end{abstract}

\section{Introduction}

This paper studies first-order bounded arithmetic theories in a language extended by an unspecified relation symbol $R$ (such theories are often called \textit{relativized}). The main theories we are to inspect all contain theory $\TR$, a subtheory of a theory $T_2(R)$, and are augmented by instances of (variants of) the pigeonhole principle. Several separations among bounded arithmetic theories and unprovability of various combinatorial principles are known (see \cite[Chapter~10]{krajicek95}). What is not known is whether the consecutive theories $T^i_2(R)$ and $T^{i+1}_2(R)$ (subtheories of $T_{2}(R)$ with induction restricted to $\Sigma^b_i(R)$-formulas and $\Sigma^b_{i+1}(R)$-formulas, respectively) can be separated by principles of a fixed quantifier complexity independent of $i$. This problem relates to the depth $d$ versus $d+1$ problem in proof complexity (see \cite[Problem 14.3.2]{krajicek19}). It is only known that if we weaken the theories by excluding the smash function $x\#y$ (the resulting theories are denoted $T^i_1(R)$) such a separation is possible (see \cite{impagliazzokrajicek02}).

We think that to make progress on this problem it may be useful to have stand-alone and elementary proofs of various separation results that can be obtained by more general methods aiming at a large variety of stronger theories. For example, the random restriction method using the $PHP$ switching lemma \cite{ajtai88} applies to the whole theory $T_2(R)$ and in a sense blurs the distinction between its fragments. 

The main technical result is a new and elementary proof of the following statement:
\begin{align}
\label{main result}
    \TR + \WPHPD \nvdash \OPHPR,
\end{align}
where $\OPHPR$ denotes the bijective pigeonhole principle stated for a binary relation symbol $R$ and $\WPHPD$ denotes the weak pigeonhole principle stated for all $\DR$-definable relations.

The qualification ``elementary" means that the proof does not refer to any
general frameworks aiming at a larger class of theories or a too-strong
method aiming at stronger theories. For example, we can note that (\ref{main result}) can be shown indirectly by the following argument. Firstly, it is known that $T_2(R)$ (and even $T^2_2(R)$) proves $\WPHPR$ (see \cite{pariswilkiewoods88}). This can be used to derive $T_2(R) \vdash \WPHPD)$ (the same is true even for $T^2_2(R)$). Secondly, using the restrictions technique and applying the already mentioned $PHP$ switching lemma one can show that $T_2(R) \nvdash \OPHPR$ (see \cite{ajtai88}, \cite{krajicekpudlakwoods95} and \cite{pitassibeameimpagliazzo95}). Combining these results we can derive the mentioned statement. Another general method is that of \cite{muller20} which extends an original construction of \cite{riis93}. The statement (\ref{main result}) is then an instance of a more general theorem showing, in short, that over $\TR$ any $weak$ combinatorial principle cannot prove a $strong$ one (see \cite{muller20} for details).

Our method has the advantage that its simple variant can be used
to construct a model of:
\begin{align}
\label{secondary result}
    \TR + \exists n (\forall m \leq n^{1-\epsilon} \PHPD{m+1}{m} + \neg \OPHPR),
\end{align}
with $\epsilon > 0$ being an arbitrarily small standard rational parameter. Neither of the two mentioned general
methods that can be used to prove (\ref{main result}) seem to be modifiable to prove (\ref{secondary result}). 

As mentioned in the abstract, (\ref{secondary result}) can be seen as a step towards an open question posed by M. Ajtai in \cite[Page~3]{ajtai90}. The full problem can be stated as follows.
\begin{problem}[M. Ajtai \cite{ajtai90}]
\label{ajtaiProblem}
Does the theory:
\begin{align*}
    T_{2}(R) + \forall m \leq n/2 (PHP_{m}^{m + 1}(\Sigma_{\infty}^{b}(R)))
\end{align*}
prove $PHP_{n}^{n + 1}(R)$?
\end{problem}

\section{Preliminaries}

The family of theories we are studying in this paper are all sound axiomatizations of true arithmetic over a first-order language denoted simply as $L$. This language consists of constant symbols $0$ and $1$, unary operations $\lfloor\frac{x}{2}\rfloor$ and $|x|$, binary operations $x+y$, $x \cdot y$ and $x \# y$, and binary relation $x < y$. The intended meanings of $0, 1, \lfloor\frac{x}{2}\rfloor, x+y, x \cdot y, x < y$ are obvious. Symbol $|x|$ denotes bit-length of $x$, i.e. $|x| = \lfloor \log_{2} x \rfloor + 1$, and $x \# y$ stands for binary smash function which is standardly interpreted as $2^{|x|\cdot |y|}$.

The main theory for $L$ is the \textit{bounded arithmetic} $T_{2}$ which is axiomatized by a finite universal theory $BASIC$ together with the induction scheme for all \textit{bounded formulas}. Recall that a formula is said to be bounded if all quantifiers occurring in it are of the form $\forall x < t$ or $\exists x < t$ with an $L$-term $t$ not containing $x$ (see \cite{buss85} or \cite{krajicek19} for a comprehensive treatment of $T_2$ and its fragments).

To define theories $S^1_2$ and $T^1_2$ and $\Delta^b_1$-formulas we need to recall the class $\Sigma^b_1$ of bounded $L$-formulas. These are formulas formed from atomic ones using connectives, any \textit{sharply bounded} quantifiers, and bounded existential quantifiers at the front. The class $\Pi^b_1$ is defined dually using bounded universal quantifiers. Sharply bounded quantifiers are of the form $\forall x < |t|$ and $\exists x < |t|$ with an $L$-term $t$ not containing $x$.

Theory $S^1_2$ extends the already mentioned $BASIC$ by the scheme of polynomial induction $PIND$ of the form:
\begin{align*}
    (\phi(0) \land \forall x (\phi(\lfloor \frac{x}{2} \rfloor) \to \phi(x))) \to \forall x \phi(x),
\end{align*}
for all $\Sigma^b_1$-formulas, while $T^1_2$ has ordinary induction $IND$ for all $\Sigma^b_1$-formulas. The induction formulas may contain, as is usual, other free variables than the induction variable.

A $\Sigma^b_1$-formula $\phi(x)$ is said to be $\Delta^b_1$ in $S^1_2$ iff there is a $\Pi^b_1$-formula $\psi(x)$ so that $S^1_2 \vdash \phi(x) \equiv \psi(x)$. We often leave out the reference ``in $S^1_2$" and consider it automatically included. It can be shown that $\Delta^b_1$-definable sets on $\mathbb{N}$ are exactly those decidable by polynomial-time algorithms \cite[page 64, Theorem 2]{buss85}.

Already the weakest of the mentioned theories $S^1_2$ is strong enough to talk about bounded sets or sequences, i.e. this theory is \textit{sequential} (consult \cite[Lemma~5.1.5]{krajicek95}).

This paper focuses on the unprovability of a particular combinatorial statement in a \textit{relativized} version of $T^1_2$ denoted as $\TR$ (and in a particular extension of the mentioned theory). To obtain such a theory we first expand the language $L$ by a single binary relation $R(x, y)$. One can naturally define classes $\Sigma^b_1(R)$ and $\Pi^b_1(R)$. $\TR$ is axiomatized by $BASIC$ together with the induction scheme for all $\Sigma^b_1(R)$-formulas. Theories $S^1_2(R), T_2(R)$ and the class $\Delta^b_1(R)$ in $S^1_2(R)$ are defined analogously. Similarly as before, one can show that $\Delta^b_1(R)$-definable sets on $\mathbb{N}$ are exactly those decidable by polynomial time algorithms with oracle access to $R$.

It is a well-known fact that the \textit{least number principle} axiom scheme is equivalent to induction when stated for all formulas. The same is true for $\Sigma^b_1$-formulas, i.e. theories $LNP(\Sigma_{1}^{b})$ and $T^1_2$ are equivalent (see \cite[Lemma~5.2.7]{krajicek95}). The situation is the same in the relativized case.

\begin{definition}
\label{pigeonhole principle}
For parameters $n, m$ we define $\PHPR{n}{m}$ as a disjunction of the following formulas:
\begin{itemize}
    \item $\exists a < n \: \forall b < m : \neg R(a, b)$,
    \item $\exists a \neq a' < n \: \exists b < m : R(a, b) \land R(a', b)$,
    \item $\exists a < n \: \exists b \neq b' < m : R(a, b) \land R(a, b')$.
\end{itemize}
We define $ontoPHP_{n}^{m}(R)$ as a disjunction of $\PHPR{n}{m}$ with the formula:
\begin{itemize}
    \item $\exists b < m \: \forall a < n: \neg R(a, b)$.
\end{itemize}
We denote $\PHPR{2m}{m}$ as $\WPHPR$ for cosmetic reasons. $\PHPD{n}{m}$ stands for a set of instances of the above disjunction with $R$ substituted for all $\DR$-formulas. As usual, those formulas may contain additional free variables than $a$ and $b$. Similarly, we denote $\PHPD{2m}{m}$ as $\WPHPD$.
\end{definition}

\section{Forcing}

We show the statement (\ref{main result}) by constructing a model of:
\begin{align*}
    \TR + \forall m \WPHPD + \exists n \neg \OPHPR,
\end{align*}
employing forcing. 

Forcing is a generic name for a class of methods and its various formal variants for bounded arithmetic were developed, cf. \cite{pariswilkie85, ajtai88, krajicek95, atseriasmuller15, riis93, muller20}. We want to keep the presentation elementary and as self-contained as possible and hence we do not refer to any of the general expositions. Our approach is pretty much the same as the one in \cite{pariswilkie85} or \cite{riis93}.

Throughout the following two sections, $\M$ is a countable non-standard model of $Th(\mathbb{N})$ in the $L$-language, and $n$ is a non-standard number from $\M$. We denote the set $\{m \in \M \: | \: m < n\}$ as $[n]$. 

We let $\I$ be a substructure of $\M$ with the domain $I$ defined as $\{m \: | \: m < 2^{|n|^{c}} \text{for some standard } c\}$. $\I$ will eventually play the role of the universe of a structure where $R$ will be suitably interpreted, although all the arguments through the following sections are done inside $\M$. The main reason behind choosing such $\I$ is that numbers of magnitude $2^n$ are absent from $\I$, while they do exist in $\M$.

Since all the formulas representing various combinatorial principles discussed in the previous section may contain free variables, whenever we mention $L(R)$-formulas or $L(R)$-sentences in the current context we allow them to contain parameters from $\I$.

\begin{definition}
\label{conditions}
Let $\P$ be the set of all partial injective functions in $\M$ of size $\leq |n|^c$ for standard $c$ between the sets $[n + 1]$ and $[n]$. We call members of $\P$ \textit{conditions} and we assume $\P$ is partially ordered by inclusion. The relation $ \sigma \supseteq \rho$ is also denoted $ \sigma \leq \rho$. 

We say that $\sigma$ and $\tau$ from $\P$ are \textit{compatible} (denoted $\sigma \| \tau$), if there is $\delta \in \P$ so that $\delta \leq \sigma$ and $\delta \leq \tau$, otherwise $\sigma$ and $\tau$ are called \textit{incompatible} (denoted $\sigma \perp \tau$).

We say that a set $D \subseteq \P$ is $\P$-\textit{definable}, if it is definable in $(\M, \P)$, the expansion of $\M$ by predicate for $\P$. We say that $D \subseteq \P$ is \textit{dense}, if for all $\sigma \in \P$ there is $\tau \in D$ so that $\tau \leq \sigma$.
\end{definition}

Note that conditions $\sigma$ and $\tau$ are compatible iff $\sigma \cup \tau \in \mathbb{P}$.

\begin{definition}
\label{generic}
A $G \subseteq \P$ is a \textit{filter}, if all the conditions from $G$ are pairwise compatible and for any $\sigma \in G$ and $\tau \in \P$ so that $\tau \geq \sigma$ it holds that $\tau \in G$ (i.e. $G$ is \textit{upwards closed}). 

We say that a filter $G$ is \textit{generic} if for any \Pdef $D$ it holds that $G \cap D \neq \emptyset$.
\end{definition}

The following statement is well-known.

\begin{proposition}
\label{generic filter exists}
For any $\sigma \in \P$ there exists a generic filter $G$ containing $\sigma$.
\end{proposition}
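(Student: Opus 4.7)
The plan is to prove this by a standard Rasiowa--Sikorski style diagonalization, exploiting the fact that $\M$ is countable. First I would observe that the collection of \Pdef dense subsets of $\P$ is countable: each such set is determined by a formula in the language of $(\M, \P)$ together with finitely many parameters from $\M$, and both the set of formulas and the domain of $\M$ are countable. So I can fix an enumeration $D_0, D_1, D_2, \ldots$ of all \Pdef dense subsets of $\P$.

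Next I would construct a descending chain of conditions
\begin{align*}
    \sigma = \sigma_0 \geq \sigma_1 \geq \sigma_2 \geq \cdots
\end{align*}
by recursion on $i$, setting $\sigma_0 := \sigma$ and at stage $i+1$ picking any $\sigma_{i+1} \in D_i$ with $\sigma_{i+1} \leq \sigma_i$. Such a $\sigma_{i+1}$ exists because $D_i$ is dense below every element of $\P$ (in particular below $\sigma_i$). Once the chain is built I would define
\begin{align*}
    G := \{ \tau \in \P \mid \tau \geq \sigma_i \text{ for some } i \in \omega \}.
\end{align*}

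Finally I would verify that $G$ is a generic filter containing $\sigma$. Upward closure holds by definition. Pairwise compatibility follows because any two elements $\tau, \tau' \in G$ lie above a common $\sigma_i$ (take the larger of the two indices witnessing their membership), and then $\sigma_i$ witnesses compatibility. Genericity holds because, given any \Pdef dense set $D = D_i$, the condition $\sigma_{i+1}$ lies in $D_i$ and in $G$, so $G \cap D \neq \emptyset$. Containment of $\sigma = \sigma_0$ in $G$ is immediate.

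There is no real obstacle here; the only point that deserves care is justifying the countability of the family of \Pdef dense sets, which is where the countability assumption on $\M$ is used essentially. Everything else is a routine Rasiowa--Sikorski construction carried out externally (the recursion and the enumeration live in the meta-theory, not inside $\M$).
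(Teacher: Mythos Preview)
Your argument is correct and is exactly the standard Rasiowa--Sikorski construction that one would expect here. The paper itself does not give a proof at all: it simply states that the proposition is well-known and moves on. Your write-up fills in precisely what the paper omits, and the only nontrivial ingredient you isolate --- the countability of the family of \Pdef dense sets, coming from the countability of $\M$ --- is indeed the reason the hypothesis on $\M$ is imposed at the start of Section~3.
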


\begin{proposition}
\label{generic bijection}
Let $G$ be any generic filter. Then $\bigcup G$ is a bijective function between the sets $[n + 1]$ and $[n]$. We will denote such function as $R_{G}$.
\end{proposition}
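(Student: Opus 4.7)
The plan is to unpack the definition of a generic filter and verify the four properties of a bijection (functionality, injectivity, totality on $[n+1]$, surjectivity onto $[n]$) separately. The first two come from compatibility inside the filter, and the last two from hitting appropriate dense $\P$-definable sets.

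First I would handle functionality and injectivity together. Suppose $(a,b),(a,b') \in \bigcup G$; then $(a,b) \in \sigma$ and $(a,b') \in \tau$ for some $\sigma,\tau \in G$. Since $G$ is a filter these are compatible, so by the remark after Definition~\ref{conditions} the union $\sigma \cup \tau$ lies in $\P$, i.e.\ is itself a partial injective function. Hence $b = b'$. An identical argument with the roles of domain and range swapped yields injectivity. So $\bigcup G$ is automatically a partial injective map from a subset of $[n+1]$ into $[n]$, with no appeal to genericity.

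Next I would show that $\text{dom}(\bigcup G) = [n+1]$. For each fixed $a \in [n+1]$, define
\[
    D_a = \{\sigma \in \P \mid a \in \text{dom}(\sigma)\}.
\]
This is plainly $\P$-definable (in fact quantifier-free in the expanded structure). To check density, take any $\sigma \in \P$; if $a \in \text{dom}(\sigma)$ we are done, otherwise $|\sigma| \leq |n|^c$ for some standard $c$, so $\text{range}(\sigma) \subsetneq [n]$ and we can pick some $b \in [n] \setminus \text{range}(\sigma)$ and set $\tau = \sigma \cup \{(a,b)\}$. Then $\tau$ is still a partial injection of size $\leq |n|^c + 1 \leq |n|^{c+1}$, hence $\tau \in \P$, $\tau \leq \sigma$, and $\tau \in D_a$. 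By genericity, $G \cap D_a \neq \emptyset$, so $a \in \text{dom}(\bigcup G)$.

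Finally, surjectivity onto $[n]$ is completely symmetric: for each $b \in [n]$ the set $E_b = \{\sigma \in \P \mid b \in \text{range}(\sigma)\}$ is $\P$-definable, and density follows by extending any $\sigma$ with $b \notin \text{range}(\sigma)$ by a pair $(a,b)$ where $a \in [n+1] \setminus \text{dom}(\sigma)$ exists because $|\sigma| \leq |n|^c < n+1$. No step here is a real obstacle; the only subtlety worth flagging is that since the size bound in Definition~\ref{conditions} allows \emph{some} standard $c$ (not a fixed one), extending a condition by a single pair keeps it in $\P$, which is exactly what makes both density arguments go through.
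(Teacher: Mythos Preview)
Your proof is correct and follows essentially the same approach as the paper's own proof: the paper likewise observes that $\bigcup F$ is a partial injection for any filter $F$ (your functionality/injectivity step), then uses exactly the dense $\P$-definable sets $D_a$ and $D^b$ (your $E_b$) to obtain totality and surjectivity from genericity. You simply spell out the density and closure-under-extension arguments in more detail than the paper does.
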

\begin{proof}
It is clear that for any filter $F$ the set $\bigcup F$ is a partial injective function between the sets $[n + 1]$ and $[n]$. To show the totality of $R_{G}$ let $a \in [n + 1]$ and let $D_{a}$ be the set of all conditions defined on $a$. Such $D_{a}$ is \Pdef and so $R_{G}$ is defined on $a$. One then shows the surjectivity of $R_{G}$ in a similar way by considering the set $D^{b}$ of all conditions having $b \in [n]$ in their range.
\end{proof}

For $R_{G}$ as above we denote the $L(R)$-structure with the domain $\I$ where $R_{G}$ interprets $R$ as $(\I, R_{G})$.

\begin{definition}
\label{forcing}
Let $\sigma \in \P$ and $\phi$ be an $L(R)$-sentence. We say that $\sigma$ \textit{forces} $\phi$ or $\phi$ is \textit{forced by} $\sigma$ (denoted as $\sigma \Vdash \phi$) if for any generic filter $G$ containing $\sigma$ it holds that $(\I, R_{G}) \vDash \phi$.
\end{definition}

Note that if sigma forces $\phi$ and $\tau \leq \sigma$, then $\tau$ forces $\phi$ too.

\begin{corollary}
\label{forcing negation of bijective pigeonhole principle}
Any condition $\sigma$ (or, equivalently, the empty condition $\sigma = \emptyset$) forces $\neg \OPHPR$.
\end{corollary}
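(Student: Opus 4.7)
The plan is to reduce the corollary directly to Proposition \ref{generic bijection}: once we know that for every generic filter $G$ the object $R_G = \bigcup G$ is a total bijection between $[n+1]$ and $[n]$, the sentence $\OPHPR$ must fail in $(\I, R_G)$ clause by clause.

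More concretely, fix an arbitrary condition $\sigma \in \P$, and let $G$ be any generic filter with $\sigma \in G$ (such a $G$ exists by Proposition \ref{generic filter exists}). By Proposition \ref{generic bijection}, $R_G$ is a bijection $[n+1] \to [n]$, which unpacks into four statements: $R_G$ is defined on every $a < n+1$, $R_G$ is single-valued, $R_G$ is injective, and $R_G$ hits every $b < n$. These are precisely the negations of the four disjuncts in Definition \ref{pigeonhole principle} for $\OPHPR$. Hence $(\I, R_G) \vDash \neg \OPHPR$, and since $G$ was arbitrary among generic filters containing $\sigma$, Definition \ref{forcing} gives $\sigma \Vdash \neg \OPHPR$. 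In particular $\emptyset \Vdash \neg \OPHPR$.

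There is essentially no obstacle here: all the real work — totality on $[n+1]$ and surjectivity onto $[n]$ via the dense sets $D_a$ and $D^b$, together with the observation that a union of a chain of partial injections is a partial injection — has already been discharged in Proposition \ref{generic bijection}. The only thing worth double-checking is that the parameters $n$ and $n+1$ used to bound the quantifiers in $\OPHPR$ are legitimate parameters for $L(R)$-sentences in the structure $(\I, R_G)$; this holds because $n \in I$ (indeed $n < 2^{|n|}$) and $I$ is closed under successor, so both $n$ and $n+1$ lie in the domain $\I$ and the sentence is well-formed there.
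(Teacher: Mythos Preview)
Your proof is correct and follows exactly the intended route: the paper leaves this corollary without proof, evidently meaning it as an immediate consequence of Proposition~\ref{generic bijection} together with the definition of forcing, which is precisely what you have written out. The extra remark that $n, n+1 \in I$ so that the sentence is well-formed in $(\I, R_G)$ is a welcome sanity check.
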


We now discuss the forcing of various types of formulas. What is needed for the main results can be summarized as follows.

\begin{theorem}
\label{forcing properties}
Let $\sigma \in \mathbb{P}$. Then the following holds:
\begin{enumerate}
    \item for $a \in [n + 1]$ and $b \in [n]$ it holds that $\sigma \Vdash R(a, b)$ iff $\{(a, b)\} \subseteq \sigma$;
    \item for $a \in [n + 1]$ and $b \in [n]$ it holds that $\sigma \Vdash \neg R(a, b)$ iff $\{(a, b)\} \perp \sigma$;
    \item for any two $L(R)$-sentences $\phi$ and $\theta$ it holds that $\sigma \Vdash \phi \land \theta$ iff $\sigma \Vdash \phi$ and $\sigma \Vdash \theta$;
    \item for any $L(R)$-formula $\phi(x)$ it holds that $\sigma \Vdash \forall x \: \phi(x)$ iff $\forall a \in \I \: : \: \sigma \Vdash \phi(a)$;
    \item for any sharply bounded $L(R)$-sentence $\phi$ it holds that $\sigma \Vdash \neg \phi$ iff $\forall \tau \leq \sigma$ it holds that $\tau \nVdash \phi$.
    \item for any two sharply bounded $L(R)$-sentences $\phi$ and $\theta$ it holds that $\sigma$ forces $\phi \lor \theta$ if and only if $\forall \tau \leq \sigma \: \exists \rho \leq \tau \: : \: \rho \Vdash \phi$ or $\rho \Vdash \theta$;
    \item for sharply bounded formula $\phi(\overline{x})$ it holds that $\sigma$ forces $\exists \overline{x} \: \phi(\overline{x})$ if and only if $\forall \tau \leq \sigma \: \exists \rho \leq \tau \: \exists \overline{a} \in \I \: : \: \rho \Vdash \phi(\overline{a})$.
\end{enumerate}
\end{theorem}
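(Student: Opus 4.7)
The plan is to prove all seven items by simultaneous induction on formula complexity, with items 1 and 2 as the base case on atomic $R$-literals. Items 3 and 4 fall out directly from the semantic definition of forcing together with Proposition \ref{generic filter exists} and do not require sharp boundedness. The real work lies in items 5--7, which I would prove together via a truth lemma for sharply bounded sentences.

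For items 1 and 2, the key point is that every condition has size at most $|n|^c$ while $[n+1]$ and $[n]$ are of nonstandard size $n$, so whenever $(a,b) \notin \sigma$ we can extend $\sigma$ to a condition sending $a$ to some $b' \neq b$, and symmetrically whenever $\{(a,b)\}$ is compatible with $\sigma$ we may simply add it. Proposition \ref{generic filter exists} applied to such an extension produces a generic $G \ni \sigma$ refuting the purported forcing of $R(a,b)$ or $\neg R(a,b)$. Item 3 is immediate from the definition of forcing, and item 4 uses that each $a \in \I$ is a legitimate parameter and that $\I$ is fixed across all generics.

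The heart of the proof is a truth lemma: for every sharply bounded $L(R)$-sentence $\phi$ and every $\sigma \in \P$ there exists $\tau \leq \sigma$ with $\tau \Vdash \phi$ or $\tau \Vdash \neg \phi$. I would establish it by induction on $\phi$, using items 1--2 for atomic $R$-literals, routine case analysis for propositional connectives, and a loop through the $|t|$-many witnesses of a sharply bounded quantifier $\forall x < |t|$, extending the condition by at most one pair per instance while preserving the bound $|\sigma| \leq |n|^{c''}$. Once the truth lemma is in hand, item 5's ``only if'' direction is immediate from monotonicity of forcing, and the ``if'' direction uses that $\{\tau \leq \sigma : \tau \Vdash \neg \phi\}$ is \Pdef (by the truth lemma plus the inductive $\P$-definability of forcing) and hence met by any generic $G \ni \sigma$. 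Items 6 and 7 follow the same density template in both directions: the ``if'' directions intersect dense $\P$-definable sets of conditions deciding the disjunction (resp.\ witnessing the outer existential), and the ``only if'' directions apply the truth lemma to each $\tau \leq \sigma$.

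The main obstacle is controlling condition sizes inside the nonstandard $\M$. Since $|t|$ for a sharply bounded quantifier $\forall x < |t|$ is typically nonstandard, iterating through its $|t|$-many witnesses threatens to blow up the size of the extended condition beyond the $|n|^c$ budget. The resolution is that $\I$ is specifically chosen to be closed under $L$-terms (the growth of the smash function is absorbed into the exponent of $2^{|n|^c}$), so that any $L$-term $t$ with parameters from $\I$ satisfies $|t| \leq |n|^{c'}$ for some standard $c'$; this keeps all accumulated extensions inside $\P$ and ensures that the intermediate sets constructed along the induction remain $\P$-definable, as required by Definition \ref{generic}. Once this bookkeeping is in place, the nested induction closes cleanly, since each of items 5--7 invokes the truth lemma only at strictly simpler sharply bounded subformulas.
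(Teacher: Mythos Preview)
Your overall plan matches the paper's: items 1--4 are handled exactly as you describe, and the paper likewise isolates a single auxiliary observation about sharply bounded sentences from which items 5--7 are derived. The bookkeeping you flag (that $|t|\le |n|^{c'}$ for $L$-terms with parameters in $\I$, so accumulated extensions stay in $\P$) is precisely the point.

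The one place your proposal diverges from the paper, and where it develops a genuine gap, is in the \emph{formulation} of that auxiliary observation. The paper does not prove your truth/decidability lemma (``every $\sigma$ extends to some $\tau$ deciding $\phi$''); instead, inside the proof of item~5 it establishes an \emph{extraction} lemma: whenever $(\I,R_G)\vDash\phi$ for sharply bounded $\phi$, there is a polylogarithmic $\tau\subseteq R_G$ with $\tau\Vdash\phi$. The inductive proof is the same one you sketch, but the conclusion is tied to a specific $R_G$ rather than merely asserting that \emph{some} extension decides. This difference matters in two places. First, your density argument for the ``if'' direction of item~5 needs $\{\tau\le\sigma:\tau\Vdash\neg\phi\}$ to be $\P$-definable, but $\P$-definability of forcing $\neg\phi$ is exactly what item~5 would give you---so the appeal to ``inductive $\P$-definability of forcing'' is circular at this step. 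The paper sidesteps this entirely: from $\sigma\nVdash\neg\phi$ it picks a generic $G\ni\sigma$ with $(\I,R_G)\vDash\phi$ and extracts $\tau\subseteq R_G$ with $\tau\le\sigma$ and $\tau\Vdash\phi$, contradicting the hypothesis directly---no density, no $\P$-definability needed. Second, for the forward direction of item~7 your truth lemma does not apply, since $\exists\overline{x}\,\phi(\overline{x})$ is in general not sharply bounded; you need to first locate a witness $\overline{a}$. The paper again uses extraction: take a generic $G\ni\tau$, find $\overline{a}$ with $(\I,R_G)\vDash\phi(\overline{a})$, then extract $\rho\subseteq R_G$ forcing $\phi(\overline{a})$. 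Only for the \emph{converse} of item~7 does the paper invoke $\P$-definability (Proposition~\ref{definability forcing}), and by then items 1--6 are already available to establish it.

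The fix is small: restate your auxiliary lemma in the extraction form and run the same induction you already outlined; then items 5 and 7 go through without the density detour, and $\P$-definability is only needed once, non-circularly, for the backward direction of item~7.
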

We split the proof of the above result into separate lemmas.

\begin{definition}
\label{forcing equivalence}
We say that $L(R)$-sentences $\phi$ and $\theta$ are \textit{forcing-equivalent} (or, simply \textit{f-equivalent}) if for any $\sigma \in \P$ it holds that $\sigma \Vdash \phi$ iff $\sigma \Vdash \theta$.
\end{definition}

Logical equivalence implies f-equivalence. So we may safely assume all the $L(R)$-formulas we are discussing are in prenex normal forms. Furthermore, evaluating all the $L$-subformulas and all the $L$-subterms in $\M$ of some $L(R)$-sentence leads to an f-equivalent sentence and so we may further assume that all the atomic subformulas are of the form $R(a, b)$ or $\neg R(a, b)$ for some $a, b \in \M$ (actually in $\I$ since we allow parameters from $\I$ only).

\begin{lemma}
\label{forcing atomic formulas}
Let $\sigma \in \P$, $a \in [n + 1]$ and $b \in [n]$. Then, $\sigma \Vdash R(a, b)$ iff $\{(a, b)\} \subseteq \sigma$ and $\sigma \Vdash \neg R(a, b)$ iff $\{(a, b)\} \perp \sigma$.
\end{lemma}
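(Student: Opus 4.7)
The plan is to prove each of the four equivalences by unwinding the definition of forcing and using Proposition \ref{generic filter exists} to produce the required witnessing generic filters. Throughout, the crucial observation is that any condition $\sigma$ has size at most $|n|^c$ for some standard $c$, whereas $n$ is non-standard, so there is always plenty of room to extend $\sigma$ to a new condition with any prescribed small modification.

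For $\sigma \Vdash R(a,b) \Leftarrow \{(a,b)\} \subseteq \sigma$, I would note that for any filter $G \ni \sigma$, we have $(a,b) \in \sigma \subseteq \bigcup G = R_G$, so $(\I, R_G) \vDash R(a,b)$. For the converse, I argue the contrapositive: assume $(a,b) \notin \sigma$ and split into two cases. If $\{(a,b)\} \perp \sigma$, then any filter $G \ni \sigma$ has $R_G$ extending $\sigma$ injectively, so $(a,b)\notin R_G$ by injectivity; applying Proposition \ref{generic filter exists} to $\sigma$ yields a generic $G$ witnessing $\sigma \nVdash R(a,b)$. If instead $\{(a,b)\} \| \sigma$, I pick some $b' \neq b$ with $b' < n$ and $b' \notin \mathrm{rng}(\sigma)$; such $b'$ exists since $|\sigma| < n$. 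Then $\sigma' := \sigma \cup \{(a,b')\} \in \P$, and any generic filter $G \ni \sigma'$ contains $\sigma$ (by upward closure) but has $(a,b')\in R_G$, so by functionality $(a,b)\notin R_G$.

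For $\sigma \Vdash \neg R(a,b) \Leftarrow \{(a,b)\} \perp \sigma$, I would observe that for any filter $G \ni \sigma$, every $\tau \in G$ is compatible with $\sigma$, hence $\tau \cup \sigma \in \P$; if $(a,b) \in \tau$ we would get $\sigma \cup \{(a,b)\} \subseteq \tau \cup \sigma \in \P$, contradicting $\{(a,b)\} \perp \sigma$. So $(a,b)\notin R_G$, giving $(\I,R_G) \vDash \neg R(a,b)$. The converse is again by contrapositive: if $\{(a,b)\} \| \sigma$ then $\sigma \cup \{(a,b)\} \in \P$, and a generic filter through this condition witnesses $(\I,R_G) \vDash R(a,b)$, so $\sigma \nVdash \neg R(a,b)$.

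The only mildly delicate step is the case in the first equivalence where $\{(a,b)\}$ is compatible with $\sigma$ but $(a,b) \notin \sigma$: here I must explicitly produce a fresh value $b'$ available as an image of $a$, and this is where I invoke the size bound $|\sigma| \leq |n|^c \ll n$ on conditions. Once this quantitative room is secured, the rest of the argument is a bookkeeping exercise with filters, compatibility, and upward closure.
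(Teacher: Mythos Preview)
Your proof is correct and follows essentially the same approach as the paper's, just spelled out in more detail: the paper also argues the forward direction directly and, for the converse, extends $\sigma$ to some $\tau$ containing either $(a,b')$ or $(a',b)$ with $b'\neq b$, $a'\neq a$, then observes that such $\tau$ forces $\neg R(a,b)$. Your explicit case split on whether $\{(a,b)\}\perp\sigma$ or $\{(a,b)\}\|\sigma$ simply unpacks what the paper compresses into one sentence, and your treatment of the second equivalence matches what the paper dismisses as ``done similarly.''
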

\begin{proof}
Assuming $\{(a, b)\} \subseteq \sigma$ it is clear that $\sigma \Vdash R(a, b)$. So assume that $\{(a, b)\} \nsubseteq \sigma$. We can extend $\sigma$ to $\tau$ so that $(a', b) \in \tau$ or $(a, b') \in \tau$ for $a' \neq a$ and $b' \neq b$. Such $\tau$ forces $\neg R(a, b)$ implying $\sigma \nVdash R(a, b)$. 

The equivalence for $\neg R(a, b)$ is done similarly. 
\end{proof}

\begin{lemma}
\label{forcing conjunction and universal quantifier}
Let $\sigma \in \P$. For any two $L(R)$-sentences $\phi$ and $\theta$ it holds that $\sigma \Vdash \phi \land \theta$ iff $\sigma \Vdash \phi$ and $\sigma \Vdash \theta$. For any $L(R)$-formula $\psi(x)$ it holds that $\sigma \Vdash \forall x \psi(x)$ iff for all $a \in \I : \sigma \Vdash \psi(a)$.  
\end{lemma}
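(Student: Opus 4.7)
The lemma is essentially a direct consequence of the definition of forcing (Definition 3.5) combined with Tarski's definition of truth in $(\I, R_G)$. My plan is to unpack both sides and observe that the biconditionals reduce to the corresponding clauses in the semantic definition of satisfaction, so there is no combinatorial content to verify about the forcing poset $\P$ itself.

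For the conjunction, the forward direction is immediate: if $\sigma \Vdash \phi \land \theta$, then for every generic $G \ni \sigma$ we have $(\I, R_G) \vDash \phi \land \theta$, hence $(\I, R_G) \vDash \phi$ and $(\I, R_G) \vDash \theta$ separately, which by Definition 3.5 gives $\sigma \Vdash \phi$ and $\sigma \Vdash \theta$. The reverse direction is the same chain read backwards: from $\sigma \Vdash \phi$ and $\sigma \Vdash \theta$, any generic $G \ni \sigma$ satisfies both $\phi$ and $\theta$, so it satisfies their conjunction. Note that this step does not require the existence of a generic filter: it is a purely universal statement over generic filters.

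For the universal quantifier, the argument is parallel. If $\sigma \Vdash \forall x\, \psi(x)$, then for every generic $G \ni \sigma$ and every $a \in \I$ (the domain of $(\I, R_G)$), $(\I, R_G) \vDash \psi(a)$; fixing $a$ and varying $G$ shows $\sigma \Vdash \psi(a)$ for each $a \in \I$. Conversely, if $\sigma \Vdash \psi(a)$ for every $a \in \I$, then for a generic $G \ni \sigma$ we get $(\I, R_G) \vDash \psi(a)$ for every such $a$, which is precisely $(\I, R_G) \vDash \forall x\, \psi(x)$ because the quantifier ranges over the domain $\I$.

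The only point worth flagging is the choice of domain: because in Proposition 3.4 the interpreting structure is $(\I, R_G)$ rather than $(\M, R_G)$, the universal quantifier really does range over $\I$ (not over all of $\M$), so "for all $a \in \I$" in the statement matches the semantic clause exactly. I do not anticipate any genuine obstacle here; unlike the later clauses of Theorem 3.7 concerning negation, disjunction, and existential quantification (where non-trivial density/compatibility arguments on $\P$ will be required), conjunction and universal quantification commute with the universal "for all generic $G$" quantifier in the definition of $\Vdash$ for free.
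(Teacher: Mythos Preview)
Your argument is correct and is essentially the same as the paper's: both simply unfold Definition~3.5 and use that conjunction and universal quantification commute with the ``for all generic $G \ni \sigma$'' in the definition of $\Vdash$. The paper merely compresses your two cases into the single observation that $\sigma \Vdash \bigwedge_i \phi_i$ iff $\forall i\,\sigma \Vdash \phi_i$, which it also says ``easily follows from the definitions.''
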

\begin{proof}
Both equivalences follow immediately from a more general statement that $\sigma \Vdash \bigwedge_i \phi_i$ iff $\forall i \: \sigma \Vdash \phi_i$. The statement itself easily follows from the definitions.
\end{proof}

\begin{lemma}
\label{forcing negation}
Let $\sigma \in \P$ and $\phi$ be a sharply bounded $L(R)$-sentence. Then $\sigma \Vdash \neg \phi$ iff $\forall \tau \leq \sigma$ it holds that $\tau \nVdash \phi$.
\end{lemma}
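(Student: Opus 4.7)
The $(\Rightarrow)$ direction is immediate: if some $\tau \leq \sigma$ forced $\phi$, take a generic filter $G \ni \tau$ via Proposition \ref{generic filter exists}; upward closure then places $\sigma \in G$, so $(\I, R_G)$ would simultaneously satisfy $\phi$ and $\neg \phi$, which is absurd.

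For the $(\Leftarrow)$ direction, which is where sharp boundedness is genuinely used, the plan is to establish, by induction on the structure of a sharply bounded prenex sentence $\phi$ (with $L$-subterms and $L$-subformulas pre-evaluated in $\M$ so that atomic subformulas are literally $R(a,b)$ with $a,b \in \I$), that the set $D_\phi := \{\rho \in \P : \rho \Vdash \phi \text{ or } \rho \Vdash \neg \phi\}$ is dense and $\P$-definable. Granting this, assume no $\tau \leq \sigma$ forces $\phi$ and let $G$ be any generic filter containing $\sigma$. By genericity, $G$ meets $D_\phi$ at some $\rho$. Since $\rho$ and $\sigma$ both lie in $G$, they are compatible, so $\rho \cup \sigma \in \P$ is a common extension below $\sigma$. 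By the standing hypothesis, $\rho \cup \sigma$ cannot force $\phi$; yet it inherits $\rho$'s forcing of either $\phi$ or $\neg \phi$ by the monotonicity remark following Definition \ref{forcing}, so $\rho \cup \sigma$ must force $\neg \phi$, giving $(\I, R_G) \models \neg \phi$ as required.

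The induction base for $\phi = R(a,b)$ uses Lemma \ref{forcing atomic formulas}: given $\sigma$, if $\sigma \perp \{(a,b)\}$ then $\sigma$ itself forces $\neg R(a,b)$, and otherwise $\sigma \cup \{(a,b)\} \in \P$ forces $R(a,b)$ (adding one pair respects the polynomial-size bound $|n|^c$ on conditions). Conjunction steps use Lemma \ref{forcing conjunction and universal quantifier}; disjunctions and sharply bounded quantifiers $\forall x < |t|$, $\exists x < |t|$ are handled by iterating the inductive hypothesis over the quantifier range. The main obstacle will be verifying density in the sharply bounded universal case, where one must assemble a single condition that simultaneously decides $\psi(a)$ for every $a < |t|$: extending step by step stays inside $\P$ precisely because sharp boundedness caps $|t|$ by an $L$-term polynomial in $|n|$ while each decision adds only polynomially many pairs, so the cumulative condition never exceeds the polynomial size budget defining $\P$.
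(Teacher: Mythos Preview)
Your overall strategy---show that the set $D_\phi = \{\rho : \rho \Vdash \phi \text{ or } \rho \Vdash \neg\phi\}$ of deciding conditions is dense and $\P$-definable, then invoke genericity---is a legitimate alternative to the paper's argument, but as written it has a genuine gap in the $\P$-definability claim.

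You assert that $D_\phi$ is $\P$-definable but never argue it, and the point is not routine: the relation $\Vdash$ is defined \emph{semantically} in Definition~\ref{forcing}, quantifying over all generic filters, objects that are external to $\M$. To describe $\{\rho : \rho \Vdash \neg\phi\}$ by a formula over $(\M,\P)$ the natural move is to use the characterization ``no $\tau \leq \rho$ forces $\phi$''---but that is exactly the lemma you are in the middle of proving, so the argument is circular. The same problem resurfaces in your density argument for the sharply bounded universal case: carrying out a nonstandard-length iteration inside $\M$ (extending successively through each $a < |t|$) requires the single step ``pass to an extension in $D_{\psi(a)}$'' to be $\M$-definable, which again presupposes definability of forcing for the subformula. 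The repair is a carefully organised simultaneous induction establishing the lemma together with definability of $\{\rho : \rho \Vdash \psi\}$ (in effect proving Proposition~\ref{definability forcing} and Lemmas~\ref{forcing negation}--\ref{forcing existential quantifier} at once), but your inductive hypothesis as stated---only that $D_\psi$ is dense and definable---is too weak to close the loop.

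The paper avoids this circularity by arguing the contrapositive constructively: assuming $\sigma \nVdash \neg\phi$, pick a generic $G \ni \sigma$ with $(\I,R_G)\models\phi$ and extract, by a meta-induction on the standard complexity of $\phi$, a poly-size $\tau \subseteq R_G$ with $\tau \Vdash \phi$; then $\tau \cup \sigma \leq \sigma$ witnesses the failure of the hypothesis. In the universal-quantifier step the witnesses $\tau_x$ for $\psi(x)$ are all subsets of the single function $R_G$, hence automatically pairwise compatible, and their union is still poly-size. This ``truth lemma'' style never needs $D_\phi$ to be definable and never appeals to genericity a second time, which is what lets the paper prove Lemma~\ref{forcing negation} before, rather than simultaneously with, Proposition~\ref{definability forcing}.
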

\begin{proof}
The left-to-right direction is clear. Assume now that $\sigma \nVdash \neg \phi$. The statement then follows from an auxiliary observation: given a structure $(\I, R_G)$ satisfying $\phi$ one can find for a suitable standard $c$ a $|n|^c$-large subset $\tau$ in $\M$ of $R_{G}$ which forces $\phi$. This, in turn, can be established by induction on logical complexity of $\phi$. Below we show a sketch of the proof.

For atomic or negated atomic $\phi$ the claim follows from \ref{forcing atomic formulas}. Cases for conjunction and disjunction are shown in the exact same way as the ones for sharply bounded quantifiers below.

Let $\phi$ be of the form $\forall x \leq |n|^c \: \psi(x)$. By induction hypothesis we find $\tau_x$ for each $x \leq |n|^c$ so that $\tau_x \Vdash \psi(x)$. Note that all $\tau_x$ are pairwise compatible and so we can define $\tau = \bigcup_x \tau_x$ which satisfies $|\tau| \leq |n|^{c'}$ for a suitable $c'$ and $\tau \Vdash \forall x \leq |n|^c \psi(x)$.

Finally, let $\phi$ be of the form $\exists x \leq |n|^c \: \psi(x)$. Similarly as above, we pick particular $x$ so that $\tau_x \Vdash \psi(x)$ and let $\tau = \tau_x$.
\end{proof}

\begin{lemma}
\label{forcing disjunction}
Let $\sigma \in \P$. For any two sharply bounded $L(R)$-sentences $\phi$ and $\theta$ it holds that $\sigma \Vdash \phi \lor \theta$ iff $\forall \tau \leq \sigma \: \exists \rho \leq \tau \: : \: \rho \Vdash \phi$ or $\rho \Vdash \theta$.
\end{lemma}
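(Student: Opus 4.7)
My plan is to prove the two directions separately, leveraging Lemma \ref{forcing negation} (for $\neg\phi$ and $\neg\theta$, both sharply bounded) together with Proposition \ref{generic filter exists}.

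For the left-to-right direction, fix $\tau \leq \sigma$ and argue by contradiction. If no $\rho \leq \tau$ forces $\phi$ or $\theta$, then in particular every $\rho \leq \tau$ satisfies $\rho \nVdash \phi$ and $\rho \nVdash \theta$, so Lemma \ref{forcing negation} gives $\tau \Vdash \neg\phi$ and $\tau \Vdash \neg\theta$. Picking any generic filter $G$ through $\tau$ (which exists by Proposition \ref{generic filter exists}), we would simultaneously have $(\I, R_G) \vDash \neg\phi \wedge \neg\theta$ and, since $\tau \leq \sigma$ forces $\phi \lor \theta$, also $(\I, R_G) \vDash \phi \lor \theta$; contradiction.

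For the right-to-left direction, assume the density-style hypothesis and let $G$ be any generic filter containing $\sigma$. I would introduce the auxiliary set
\[
D \;=\; \{\rho \in \P \mid \rho \Vdash \phi \text{ or } \rho \Vdash \theta\} \;\cup\; \{\rho \in \P \mid \rho \perp \sigma\}.
\]
Density is immediate: given $\sigma' \in \P$, either $\sigma' \perp \sigma$ (so $\sigma' \in D$), or $\sigma' \cup \sigma \in \P$ and $\sigma' \cup \sigma \leq \sigma$, at which point the hypothesis supplies a $\rho \leq \sigma' \cup \sigma \leq \sigma'$ forcing $\phi$ or $\theta$. By genericity $G \cap D \neq \emptyset$; pick $\rho \in G \cap D$. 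Since $\sigma, \rho \in G$ are compatible, $\rho \not\perp \sigma$, so $\rho \Vdash \phi$ or $\rho \Vdash \theta$. In either case $(\I, R_G) \vDash \phi \lor \theta$, as required.

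The only real obstacle is justifying that $D$ is $\P$-definable, since this is what allows genericity to apply. This must be handled by a tacit outer induction on the logical complexity of the sharply bounded sentences being forced: the forcing relations $\Vdash \phi$ and $\Vdash \theta$ are $\P$-definable by the inductive hypothesis, given the explicit definability clauses provided by Lemmas \ref{forcing atomic formulas}, \ref{forcing conjunction and universal quantifier}, and \ref{forcing negation} for the simpler subformulas. I would state this inductive assumption at the outset of the proof, exactly as the earlier lemmas implicitly do, so that $D$ visibly falls within the class of $\P$-definable sets to which genericity applies.
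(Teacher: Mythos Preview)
Your left-to-right argument is essentially the paper's: both use Lemma \ref{forcing negation} to conclude $\tau \Vdash \neg\phi$ and $\tau \Vdash \neg\theta$, and then derive a contradiction (you via a generic $G$, the paper via Lemma \ref{forcing conjunction and universal quantifier}; these are cosmetic variants).

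The right-to-left direction is where you diverge. You argue directly via a density set $D$ and genericity, which forces you to confront the $\P$-definability of $\{\rho:\rho\Vdash\phi\}$ and $\{\rho:\rho\Vdash\theta\}$, and hence to recast the whole lemma inside a simultaneous induction on formula complexity. That is a legitimate route, and indeed it mirrors how the paper later handles the existential case in Lemma \ref{forcing existential quantifier}, but it is heavier than necessary here. The paper instead proves the contrapositive: if $\sigma\nVdash\phi\lor\theta$, then since $\phi\lor\theta$ is sharply bounded, the reformulation of Lemma \ref{forcing negation} ($\pi\nVdash\psi$ iff some $\delta\leq\pi$ forces $\neg\psi$) yields $\tau\leq\sigma$ with $\tau\Vdash\neg\phi\land\neg\theta$, and this $\tau$ directly witnesses the failure of the density condition. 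No definability is needed, and no outer induction has to be invoked.

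So your proposal is correct, but the paper's contrapositive is cleaner: it keeps Lemma \ref{forcing disjunction} self-contained and lets Proposition \ref{definability forcing} be stated afterwards as a corollary of the assembled lemmas, rather than being entangled with them.
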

\begin{proof}
Let $\sigma$ force $\phi \lor \theta$ and let $\tau \leq \sigma$. Assume for contradiction that $\forall \rho \leq \tau$ $\rho \nVdash \phi$ and $\rho \nVdash \theta$. Since both $\phi$ and $\theta$ are sharply bounded we use Lemma \ref{forcing negation} to derive $\tau \Vdash \neg \phi$ and $\tau \Vdash  \neg \theta$ which is equivalent to $\tau \Vdash \neg \phi \land \neg \theta$ by Lemma \ref{forcing conjunction and universal quantifier}. This, however, contradicts the fact that $\sigma \Vdash \phi \lor \theta$ and $\tau \leq \sigma$.

Now assume $\sigma \nVdash \phi \lor \theta$. Note that Lemma \ref{forcing negation} can be formulated as $\pi \nVdash \psi$ iff $\exists \delta \leq \pi$ so that $\delta \Vdash \neg \psi$ for $\psi$ a sharply bounded formula. Since $\phi \lor \theta$ is sharply bounded, we can use the above statement to extend $\sigma$ to $\tau$ so that $\tau \Vdash \neg \phi \land \neg \theta$. Such $\tau$ can not be extended to $\rho$ forcing $\phi$ or $\theta$.
\end{proof}

Before we prove the last part of Theorem \ref{forcing properties} we state the following proposition which is readily established by induction on logical complexity together with the previously proved lemmas.

\begin{proposition}
\label{definability forcing}
Let $\phi(\overline{x})$ be a sharply bounded formula. The set of all pairs $(\sigma, \overline{c})$ so that $\sigma \Vdash \phi(\overline{c})$ is $\P$-definable.
\end{proposition}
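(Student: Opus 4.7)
The plan is to proceed by induction on the logical complexity of $\phi$. As the remarks preceding Lemma \ref{forcing atomic formulas} allow, I may assume $\phi$ is in prenex normal form and that every atomic subformula has already been reduced to one of the shapes $R(a,b)$ or $\neg R(a,b)$ with parameters drawn from $\I$.

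For the base case, Lemma \ref{forcing atomic formulas} equates $\sigma \Vdash R(a,b)$ with $(a,b) \in \sigma$ and $\sigma \Vdash \neg R(a,b)$ with $\{(a,b)\} \perp \sigma$. The first is plainly $\P$-definable (conditions are elements of $\P$ and membership in them is visible in $(\M, \P)$), and incompatibility is $\P$-definable by unwinding Definition \ref{conditions}. For the inductive step I would march through the clauses of Theorem \ref{forcing properties} already proved in the preceding lemmas. Items 3 and 4 (conjunction and universal quantifier) express forcing purely in terms of forcing of immediate subformulas, so $\P$-definability is immediate from the induction hypothesis. Item 5 (negation of a sharply bounded sentence) characterizes $\sigma \Vdash \neg \phi$ by a universal quantifier over conditions $\tau \le \sigma$; since $\P$ and the order $\le$ are $\P$-definable by construction, this preserves $\P$-definability. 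Items 6 and 7 (disjunction and sharply bounded existential) add a further existential condition-quantifier $\exists \rho \le \tau$, and in item 7 also a sharply bounded $\M$-quantifier over the witness $\overline{a}$; all three are permissible in the expansion $(\M, \P)$, so the resulting forcing relation remains $\P$-definable.

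The only clauses needing a moment of care are the sharply bounded quantifier cases: $\sigma \Vdash \forall x \leq |t| \, \psi(x)$ is equivalent, via Lemma \ref{forcing conjunction and universal quantifier}, to the conjunction over all $x \leq |t|$ in $\M$ of $\sigma \Vdash \psi(x)$, which is a genuine bounded $\M$-quantification and hence definable in $(\M, \P)$ by induction hypothesis; the sharply bounded existential is handled by item 7. No real obstacle arises: the proposition is essentially a bookkeeping corollary of Lemmas \ref{forcing atomic formulas}, \ref{forcing conjunction and universal quantifier}, \ref{forcing negation}, and \ref{forcing disjunction}, the key observation being that every characterization of forcing established so far uses only (i) $\P$-definable relations on conditions and (ii) bounded $\M$-quantifications, both of which are closed under the operations needed to combine them across the inductive step.
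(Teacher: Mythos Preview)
Your overall approach---induction on the logical complexity of $\phi$, invoking the already-proved characterizations of forcing for each connective---matches the paper's, which simply remarks that the result ``is readily established by induction on logical complexity together with the previously proved lemmas.''

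There is, however, a circularity in your treatment of the sharply bounded existential. You appeal to item~7 of Theorem~\ref{forcing properties}, but that item is Lemma~\ref{forcing existential quantifier}, which appears \emph{after} the present proposition and whose proof explicitly invokes Proposition~\ref{definability forcing} to conclude that the set $D$ of conditions forcing some instance is $\P$-definable. So you cannot cite it here. Note also that item~7 concerns an unrestricted existential over $\I$ applied to a sharply bounded kernel; the resulting sentence is $\Sigma^b_1(R)$, not sharply bounded, and the witness quantifier $\exists\,\overline{a}\in\I$ is not an $\M$-bounded quantifier as you claim.

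The repair is immediate and stays within the lemmas already available at this point: rewrite $\exists x\le|t|\,\psi(x)$ as $\neg\,\forall x\le|t|\,\neg\psi(x)$ and apply item~4 (Lemma~\ref{forcing conjunction and universal quantifier}) together with item~5 (Lemma~\ref{forcing negation}), both of which precede the proposition. Alternatively, treat the sharply bounded existential directly as an $\M$-bounded disjunction and adapt the argument of Lemma~\ref{forcing disjunction}. Either route preserves $\P$-definability exactly as you describe for the other clauses.
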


\begin{lemma}
\label{forcing existential quantifier}
For any sharply bounded $L(R)$-formula $\phi(\overline{x})$ it holds that $\sigma \Vdash \exists \overline{x} \psi(\overline{x})$ iff $\forall \tau \leq \sigma \: \exists \rho \leq \tau \: \exists \overline{a} \in \I : \: \rho \Vdash \phi(\overline{a})$.
\end{lemma}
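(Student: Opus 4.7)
The plan is to prove the two directions separately, reusing the machinery established in the preceding lemmas.

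For the $(\Rightarrow)$ direction, I would argue by contrapositive. Suppose there exists $\tau \leq \sigma$ such that for every $\rho \leq \tau$ and every $\overline{a} \in \I$ we have $\rho \nVdash \phi(\overline{a})$. Fix any $\overline{a} \in \I$; since $\phi(\overline{a})$ is sharply bounded, Lemma \ref{forcing negation} gives $\tau \Vdash \neg \phi(\overline{a})$. Applying Lemma \ref{forcing conjunction and universal quantifier} uniformly in $\overline{a}$ yields $\tau \Vdash \forall \overline{x}\, \neg \phi(\overline{x})$. Extending $\tau$ to a generic filter $G$ via Proposition \ref{generic filter exists} then gives $(\I, R_G) \vDash \forall \overline{x}\, \neg \phi(\overline{x})$, which contradicts $\sigma \in G$ together with the assumption $\sigma \Vdash \exists \overline{x}\, \phi(\overline{x})$.

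For the $(\Leftarrow)$ direction, I would run a density argument in the style of the proofs of Lemmas \ref{forcing negation} and \ref{forcing disjunction}. Fix a generic filter $G \ni \sigma$; the goal is $(\I, R_G) \vDash \exists \overline{x}\, \phi(\overline{x})$. Define
\[
D := \{\rho \in \P : \rho \perp \sigma\} \;\cup\; \{\rho \in \P : \exists \overline{a} \in \I,\ \rho \Vdash \phi(\overline{a})\}.
\]
$\P$-definability of $D$ follows from Proposition \ref{definability forcing}, which supplies uniform $\P$-definability of the forcing relation in the parameter tuple. For density, given $\pi \in \P$: if $\pi \perp \sigma$ then already $\pi \in D$; otherwise $\tau := \pi \cup \sigma \in \P$ satisfies $\tau \leq \sigma$ and $\tau \leq \pi$, so the hypothesis furnishes $\rho \leq \tau$ with $\rho \Vdash \phi(\overline{a})$ for some $\overline{a} \in \I$, and this $\rho$ refines $\pi$ and lies in $D$. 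Genericity then yields some $\rho \in G \cap D$; since $\sigma \in G$ and members of $G$ are pairwise compatible, $\rho$ cannot satisfy the first clause, so $\rho \Vdash \phi(\overline{a})$ for some $\overline{a} \in \I$. As $\rho \in G$, the definition of forcing delivers $(\I, R_G) \vDash \phi(\overline{a})$, hence $(\I, R_G) \vDash \exists \overline{x}\, \phi(\overline{x})$.

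The step I expect to be most delicate is the $\P$-definability of the second clause of $D$, since $\I$ is defined as a union indexed by the standard naturals and is therefore not manifestly first-order definable in $\M$. However, this is precisely the parameter range that the conventions of this section already accommodate (with $L(R)$-sentences permitted to carry parameters from $\I$), and Proposition \ref{definability forcing} is designed to supply uniform definability in exactly this setting; so I expect the technicality to dissolve as soon as the proposition is invoked, exactly as in the preceding lemmas of the same theorem.
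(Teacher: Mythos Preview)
Your proposal is correct. The $(\Leftarrow)$ direction is essentially identical to the paper's: both form the set $D' = \{\rho : \rho \perp \sigma\} \cup \{\rho : \exists \overline{a},\ \rho \Vdash \phi(\overline{a})\}$, argue density and $\P$-definability (via Proposition~\ref{definability forcing}), and use genericity together with compatibility in $G$ to land in the second clause. Your flagged concern about the existential over $\I$ is handled in the paper exactly as you anticipate, by a bare appeal to Proposition~\ref{definability forcing}.

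The $(\Rightarrow)$ direction differs slightly. The paper argues directly: given $\tau \leq \sigma$, it picks a generic $G \ni \tau$, finds a witness $\overline{a}$ in $(\I,R_G)$, and then reuses the ``auxiliary observation'' from the proof of Lemma~\ref{forcing negation} to extract a small $\rho \subseteq R_G$ with $\rho \leq \tau$ and $\rho \Vdash \phi(\overline{a})$. You instead go by contrapositive, packaging that same observation through Lemma~\ref{forcing negation} itself (to get $\tau \Vdash \neg\phi(\overline{a})$ for every $\overline{a}$) and Lemma~\ref{forcing conjunction and universal quantifier} (to aggregate into $\tau \Vdash \forall\overline{x}\,\neg\phi(\overline{x})$). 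Your route is marginally cleaner in that it treats the earlier lemmas as black boxes rather than reopening a proof; the paper's route is a hair more constructive in that it exhibits the witnessing $\rho$ and $\overline{a}$ explicitly. Both are standard and equally acceptable.
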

\begin{proof}
Assume $\sigma \Vdash \exists \overline{x} \phi(\overline{x})$ and let $\tau \leq \sigma$. Note that $\tau \Vdash \exists \overline{x} \phi(\overline{x})$, too. Let $G$ be a generic filter containing $\tau$ such that it holds $(\I, R_{G}) \vDash \exists \overline{x} \phi(\overline{x})$. We can then find $\overline{a}$ so that $(\I, R_{G}) \vDash \phi(\overline{a})$ and as in the proof of Lemma \ref{forcing negation} we can take a $|n|^c$-large subset $\rho$ in $\M$ of $R_{G}$ so that $\rho \leq \tau$ and $\rho \Vdash \phi(\overline{a})$.

Conversely, assume $\forall \tau \leq \sigma \: \exists \rho \leq \tau \: \exists \overline{a} \in \I : \: \rho \Vdash \phi(\overline{a})$. Note that this means the set $D$ of all conditions $\rho$ for which there is $\overline{a}$ so that $\rho \Vdash \phi(\overline{a})$ is \textit{dense relative} to $\sigma$, i.e. the density condition is satisfied for all $\tau$ extending $\sigma$.  

By Proposition \ref{definability forcing} it follows that the set $D$ is also $\P$-definable. Thus any generic filter $G$ containing $\sigma$ must contain some condition $\rho$ forcing $\phi(\overline{a})$ for some $\overline{a}$ proving $\sigma \Vdash \exists \overline{x} \phi(\overline{x})$. The latter is true since any generic filter containing $\sigma$ must intersect any $\P$-definable set $D$ which is dense relative to $\sigma$. This can be shown by considering the set $D' = D \cup \{\rho \: | \: \rho \perp \sigma\}$ which is \Pdef and so for any generic $G$ there is some $\rho \in G \cap D'$. But if $G$ contains $\sigma$, such $\rho$ must be compatible with $\sigma$ and so is necessary a member of $D$. 
\end{proof}

The following theorem is a fundamental fact regarding $\TR$ and forcing construction. The first version of the theorem appeared in \cite{pariswilkie85} for closely related theory $I\exists_1(R)$ and its modification for $\TR$ was shown in \cite{riis93}. We skip the proof, the reader can find details in \cite[pp. 273 - 274]{krajicek95}.

\begin{theorem}
\label{Paris, Wilkie}
For any sharply bounded $L(R)$-sentence $\phi(x, \overline{y})$ it holds that:
\begin{align*}
    \emptyset \Vdash LNP(\exists \overline{y} \phi(x, \overline{y})).
\end{align*}
\end{theorem}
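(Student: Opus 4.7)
The plan is to show $\emptyset \Vdash LNP(\psi)$, where $\psi(x) := \exists \overline{y}\,\phi(x, \overline{y})$, by exhibiting a dense and $\P$-definable set $D \subseteq \P$ each of whose members forces this instance of LNP. Since every generic filter $G$ meets $D$, some $\rho \in G$ will then force $LNP(\psi)$, which gives $(\I, R_G) \models LNP(\psi)$ for every generic $G$.

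Let $t(x)$ denote the implicit bound on $\overline{y}$ in $\psi$ (so $\psi$ has the $\Sigma^b_1$ shape $\exists \overline{y} \leq t(x)\,\phi(x,\overline{y})$). I define $D$ to consist of those $\rho \in \P$ satisfying one of the following: $(\star_1)$ $\forall \tau \leq \rho\, \forall a\, \forall \overline{b} \leq t(a):\ \tau \nVdash \phi(a, \overline{b})$; or $(\star_2)$ there exist $a$ and $\overline{b} \leq t(a)$ with $\rho \Vdash \phi(a, \overline{b})$ such that $\forall \tau \leq \rho\, \forall b < a\, \forall \overline{c} \leq t(b):\ \tau \nVdash \phi(b, \overline{c})$. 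Proposition \ref{definability forcing} makes $D$ itself $\P$-definable, and items 4 and 5 of Theorem \ref{forcing properties} turn $(\star_1)$ into $\rho \Vdash \forall x\, \neg \psi(x)$ (so LNP is vacuously forced) and turn $(\star_2)$ into $\rho \Vdash \psi(a) \land \forall y < a\, \neg \psi(y)$ (an explicit least witness for the LNP conclusion).

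For density, fix $\sigma \in \P$ and work inside $\M$. Consider the $\M$-definable set $T_\sigma := \{a \in \M : \exists \tau \leq \sigma\, \exists \overline{b} \leq t(a):\ \tau \Vdash \phi(a, \overline{b})\}$. If $T_\sigma = \emptyset$ then $\sigma$ itself satisfies $(\star_1)$. Otherwise the full LNP available in $\M$ (recall $\M \models Th(\mathbb{N})$) supplies $a^* := \min T_\sigma$ together with some $\rho^* \leq \sigma$ and $\overline{b}^* \leq t(a^*)$ with $\rho^* \Vdash \phi(a^*, \overline{b}^*)$. Any putative violator of $(\star_2)$ for $(\rho^*, a^*, \overline{b}^*)$ would supply $\tau \leq \rho^* \leq \sigma$ and $\overline{c} \leq t(b)$ forcing $\phi(b, \overline{c})$ for some $b < a^*$, placing $b$ into $T_\sigma$ below its minimum, which is impossible. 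Hence $\rho^* \in D$.

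The main subtlety --- and I expect the only real point requiring care --- is that $\M$'s LNP is applied to a set whose parameters range over all of $\M$, while the semantic LNP in $(\I, R_G)$ quantifies over the initial segment $\I$. This is reconciled by the fact that $\I$ is downwards closed in $\M$ and closed under $L$-terms: if $T_\sigma \cap \I \neq \emptyset$ then $a^* \in \I$ automatically, and $\overline{b}^* \leq t(a^*)$ lies in $\I$ as well, so that $(a^*, \overline{b}^*)$ is a genuine witness in $(\I, R_G)$; if on the other hand $T_\sigma \cap \I = \emptyset$, then $(\star_1)$ still holds at $\sigma$ after restricting all parameters to $\I$, so $\sigma$ forces $\forall x\,\neg\psi(x)$ inside $(\I, R_G)$ and LNP is again vacuous. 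This is in essence the argument of Paris--Wilkie and of Riis referred to in the theorem statement.
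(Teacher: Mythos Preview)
The paper does not prove this theorem, deferring to \cite[pp.~273--274]{krajicek95} and the original Paris--Wilkie/Riis arguments; your density argument (use the full LNP available in $\M$ to find, below any given condition $\sigma$, an extension forcing either $\forall x\,\neg\psi(x)$ or an explicit least witness) is exactly that standard proof, and it is correct. One minor streamlining of your final paragraph: rather than re-splitting on whether $T_\sigma$ meets $\I$, observe directly that any $\rho \in D$ via $(\star_2)$ with witness $a \notin \I$ already forces $\forall x\,\neg\psi(x)$ over $\I$ (every element of $\I$ lies below $a$, so the ``no smaller witness'' clause of $(\star_2)$ covers all of $\I$); hence every member of your $\P$-definable dense set $D$ forces $LNP(\psi)$ outright, and the genericity argument concludes without further case analysis.
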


\begin{corollary}
\label{generic model}
For any generic filter $G$ it holds that:
\begin{align*}
    (\I, R_{G}) \vDash \TR + \exists n \neg \OPHPR.
\end{align*}
\end{corollary}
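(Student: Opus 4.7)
The plan is to split the conjunction into the two parts $\TR$ and $\exists n \, \neg \OPHPR$ and handle them separately. The conjunct $\exists n \, \neg \OPHPR$ is essentially immediate: the element $n \in \M$ fixed at the start of the section lies in $\I$ since $n < 2^{|n|}$, and by Proposition \ref{generic bijection} the function $R_G = \bigcup G$ is a total bijection between $[n + 1]$ and $[n]$. Inspecting the four disjuncts in the definition of $\OPHPR$ at this $n$, totality of $R_G$ refutes the first, injectivity of $R_G$ refutes the second and third, and surjectivity of $R_G$ refutes the fourth, so $(\I, R_G) \vDash \exists n \, \neg \OPHPR$.

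For $\TR$, I will use that over $BASIC$ the theory is equivalent to $LNP$ for all $\Sigma^b_1(R)$-formulas (with parameters from $\I$), as noted in the preliminaries. The axioms of $BASIC$ are universal sentences in the pure language $L$ and they hold in $\M$ because $\M \vDash Th(\mathbb{N})$; it therefore suffices to verify that $\I$ is an $L$-substructure of $\M$, which is a short calculation with the bounds $2^{|n|^c}$ using $|a + b|, |a \cdot b| \leq |a| + |b| + 1$ and $a \# b = 2^{|a| \cdot |b|}$, all of which only increase the exponent $c$ by a standard amount. For $LNP(\Sigma^b_1(R))$, I will rewrite an arbitrary $\Sigma^b_1(R)$-formula in the prenex form $\exists \overline{y} \, \phi(x, \overline{y})$ with $\phi$ sharply bounded, absorbing the bounds of the leading existential block into $\phi$. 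Theorem \ref{Paris, Wilkie} then gives $\emptyset \Vdash LNP(\exists \overline{y} \, \phi(x, \overline{y}))$, and since $G$ is generic and in particular extends $\emptyset$, Definition \ref{forcing} yields $(\I, R_G) \vDash LNP(\exists \overline{y} \, \phi(x, \overline{y}))$.

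The only genuinely hard step, Theorem \ref{Paris, Wilkie}, is quoted from the literature rather than proved here, so no new difficulty arises at that point. The one bookkeeping subtlety worth flagging is that $R_G \subseteq [n + 1] \times [n] \subseteq \I \times \I$, so passing from the ambient construction inside $\M$ to the $L(R)$-structure with domain $\I$ does not truncate $R_G$; and the convention that $L(R)$-formulas may carry parameters from $\I$ is precisely what allows the single scheme produced by Theorem \ref{Paris, Wilkie} to cover every instance of $LNP$ for $\Sigma^b_1(R)$-formulas with parameters.
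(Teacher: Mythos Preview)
Your proposal is correct and follows exactly the approach the paper intends: the corollary is stated there without proof, as an immediate consequence of Corollary~\ref{forcing negation of bijective pigeonhole principle} (for the $\neg\OPHPR$ conjunct) and Theorem~\ref{Paris, Wilkie} together with the $LNP(\Sigma^b_1(R)) \equiv \TR$ remark in the preliminaries (for the $\TR$ conjunct). Your extra bookkeeping about $BASIC$ and closure of $\I$ under the $L$-operations just makes explicit what the paper already arranged when defining $\I$ as an $L$-substructure of $\M$.
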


\section{\textit{WPHP}-Arrays}
\label{section WPHP arrays}

This whole section is devoted to the proof of:
\begin{align*}
    \emptyset \Vdash \WPHPD,
\end{align*}
where $m \in \I$.

The argument is as follows: assuming $\emptyset \nVdash WPHP^{2m}_m(\phi(x, y))$ for a $\DR$-formula $\phi(x, y)$ we construct a certain combinatorial structure. Then, by using two different arguments we establish upper and lower bounds on the size of such structure resulting in a contradiction, thus showing that $\emptyset \Vdash \WPHPD$. 

The following concepts are defined in the ground model $\M$.

\begin{definition}
\label{WPHP array}
Let $\sigma \in \P$ and $m, k \in \M$ with $k \leq |n|^c$ for standard $c$. We define an \textit{$(m,k,\sigma)$-WPHP-array} (or just \WAs{m}{k}) as an array $A$ indexed by $[2m]\times[m]$ so that each entry is a set of conditions of sizes bounded from above by $k$ and satisfying the following properties:
\begin{enumerate}
    \item $\forall a \neq a' \in [2m] \: \forall b \neq b' \in [m] \: : \: A(a, b) \cap A(a', b) = A(a, b) \cap A(a, b') = \emptyset$;
    \item $\forall a, a' \in [2m] \: \forall b \in [m] \: \forall \tau \in A(a, b) \: \forall \tau' \in A(a', b) \: : \: \tau \neq \tau' \to \tau \perp \tau'$;
    \item $\forall a \in [2m] \: \forall b, b' \in [m] \: \forall \tau \in A(a, b) \: \forall \tau' \in A(a, b') \: : \: \tau \neq \tau' \to \tau \perp \tau'$;
    \item $\forall \rho \leq \sigma \: \forall a \in [2m] \: \exists b \in [m] \: \exists \tau \in A(a, b) \: : \: \tau \| \rho$.
\end{enumerate}
We require that each $\tau \in A(a, b)$ is compatible with $\sigma$ and does not intersect $\sigma$.
\end{definition}

\begin{proposition}
\label{size of WPHP array}
Let $A$ be \WAs{m}{k}. Define the \textit{size} of $A$ as the number $N = \sum_{a, b}|A(a, b)|$, where $|A(a, b)|$ is a number of conditions in $A(a, b)$. Then:
\begin{align*}
    N = \sum_{a}|\bigcup_{b}A(a, b)| = \sum_{b}|\bigcup_{a}A(a, b)|.
\end{align*}
\end{proposition}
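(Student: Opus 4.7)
The plan is to observe that the proposition is a purely set-theoretic counting identity that follows immediately from the disjointness condition (1) in Definition \ref{WPHP array}. The key observation is that condition (1) says exactly that the family $\{A(a,b)\}_{b \in [m]}$ is pairwise disjoint for each fixed $a$, and that $\{A(a,b)\}_{a \in [2m]}$ is pairwise disjoint for each fixed $b$. Conditions (2)--(4), which encode the incompatibility and density requirements, play no role in this counting lemma.

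So the first step is to fix $a \in [2m]$ and apply the ``$A(a,b) \cap A(a,b') = \emptyset$ for $b \neq b'$'' part of condition (1) to conclude
\[
    \Bigl|\bigcup_{b \in [m]} A(a,b)\Bigr| = \sum_{b \in [m]} |A(a,b)|.
\]
Summing over $a \in [2m]$ gives the first equality with $N$. The symmetric step is to fix $b \in [m]$ and apply the ``$A(a,b) \cap A(a',b) = \emptyset$ for $a \neq a'$'' part to get $\bigl|\bigcup_{a} A(a,b)\bigr| = \sum_{a} |A(a,b)|$, and then sum over $b$ to obtain the second equality.

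There is no real obstacle here; the only minor point to keep in mind is that all these sums and unions are carried out inside $\M$, which is a model of true arithmetic, so ordinary combinatorial reasoning about finite sets of size $\le |n|^c$ is available. Thus the ``proof'' is essentially a one-line invocation of condition (1) in each direction, and the statement of the proposition is really just fixing convenient notation for the two row-wise and column-wise ways of computing $N$, which will be used in the upper and lower bound arguments later in the section.
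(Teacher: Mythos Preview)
Your proposal is correct and matches the paper's own proof, which is a single sentence: ``This follows from the first property of $A_{m,\sigma}^{k}$-arrays.'' You have simply spelled out in detail how condition~(1) gives the disjointness needed for the two counting identities, which is exactly the intended argument.
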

\begin{proof}
This follows from the first property of $A_{m, \sigma}^{k}$-arrays.
\end{proof}

It is hard to compute non-trivial lower and upper bounds on the size of a general \WAs{m}{k} $A$. The way to proceed is to create a related array $A'$ with nicer combinatorial properties.
\begin{definition}
An \WAs{m}{k} $A$ is called $k'$-\textit{uniform} (or just \textit{uniform} in case $k' = k$), if all the conditions from the set $\bigcup_{a, b}A(a, b)$ are of the size $k'$.
\end{definition}
As we will show later, uniformity alone is enough to compute non-trivial upper bounds on the size of a \WAs{m}{k}. However, it is not enough to calculate meaningful lower bounds.

\begin{definition}
\label{PHP tree}
Let $D \subseteq [n + 1]$ and $R \subseteq [n]$. A \textit{PHP-tree} over $D$ and $R$ is defined inductively as follows:
\begin{itemize}
    \item a single node (a root) is a $\PHPT$ over any $D$ and $R$;
    \item for every $a \in D$ the following is a $PHP$-tree over $D$ and $R$:
    \begin{itemize}
        \item at the root the tree branches according to all $b \in R$, labeling the corresponding edge $(a, b)$;
        \item at the end-point of the edge labeled by $(a, b)$ the tree continues as a $\PHPT$ over $D \setminus \{a\}$ and $R \setminus \{b\}$;
    \end{itemize}
    \item for every $b \in R$ the following is a $\PHPT$ over $D$ and $R$:
    \begin{itemize}
        \item at the root the tree branches according to all $a \in D$, labeling the corresponding edge $(a, b)$;
        \item at the end-point of the edge labeled by $(a, b)$ the tree continues as a $\PHPT$ over $D \setminus \{a\}$ and $R \setminus \{b\}$.
    \end{itemize}
\end{itemize}
We further label each node of a $\PHPT$ by the set of pairs labeling edges leading to the given node from the root (the root itself is labeled as $\emptyset$).

For $\sigma \in \P$ we define $\PHPTU{\sigma}$ as a $\PHPT$ over $[n + 1] \setminus D_{\sigma}$ and $[n] \setminus R_{\sigma}$, where $D_{\sigma}$ is the domain of $\sigma$ and $R_{\sigma}$ is the range of $\sigma$. 

For $\sigma, \tau \in \P$ which are compatible we define $\PHPTB{\sigma}{\tau}$ as a $\PHPTU{\sigma}$ with the property that for each $(a, b) \in \tau \setminus \sigma$ every leaf in the given tree is labeled by a set either containing $(a, b)$ or containing $(a', b)$ and $(a, b')$ for $a \neq a'$ and $b \neq b'$.

We say that a $\PHPT$ is $k$-\textit{uniform} if all its maximal paths (starting from the root) have the same length equal to $k$.
\end{definition}
For a detailed discussion about $PHP$-trees and their applications consult \cite[Chapter~15.1]{krajicek19}. 

Since each $\sigma \in \P$ is an element of $\M$ we can talk about the size of $\sigma$ (denoted as $|\sigma|$) which may be non-standard.

\begin{proposition}
\label{size of uniform PHP tree}
Let $\sigma \in \P$ and $P$ a $k$-uniform $\PHPTU{\sigma}$-tree. The number of maximal paths (i.e. the number of leaves) in $P$ is lower-bounded by $\frac{(n - |\sigma|)!}{(n - |\sigma| - k)!}$.
\end{proposition}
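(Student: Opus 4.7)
The plan is to prove this by straightforward induction on the uniform depth $k$, bounding the fan-out at each internal node by the minimum of the two possible branching widths.

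The base case $k = 0$ is trivial: $P$ is a single root, contributing one maximal path, and $\frac{(n-|\sigma|)!}{(n-|\sigma|)!} = 1$. For the inductive step, I would examine the root of $P$. By Definition \ref{PHP tree}, the root either branches on some fixed $a \in [n+1]\setminus D_\sigma$, with one child per element of $[n]\setminus R_\sigma$ (fan-out $n - |\sigma|$), or on some fixed $b \in [n]\setminus R_\sigma$, with one child per element of $[n+1]\setminus D_\sigma$ (fan-out $n + 1 - |\sigma|$). In either case, the fan-out is at least $n - |\sigma|$.

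Each edge out of the root is labeled by a pair $(a, b)$, and by the recursive clause of the definition, the subtree at the other endpoint is a $\PHPTU{\sigma \cup \{(a,b)\}}$-tree, i.e.\ a PHP-tree over domain and range of sizes reduced by one. Since $P$ is $k$-uniform, each such subtree is $(k-1)$-uniform; by the induction hypothesis each contributes at least $\frac{(n-|\sigma|-1)!}{(n-|\sigma|-1-(k-1))!} = \frac{(n-|\sigma|-1)!}{(n-|\sigma|-k)!}$ leaves. Multiplying by the fan-out bound gives
\[
(n - |\sigma|) \cdot \frac{(n-|\sigma|-1)!}{(n-|\sigma|-k)!} = \frac{(n-|\sigma|)!}{(n-|\sigma|-k)!},
\]
as required.

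There is no real obstacle here. The only subtlety is that the two kinds of branching at the root have different fan-outs ($n - |\sigma|$ versus $n+1-|\sigma|$), but taking the minimum of the two already suffices for the stated bound; strictness of the inequality when a ``$b$-branching'' occurs is not needed. Implicitly, $k$-uniformity forces $k \leq n - |\sigma|$ (otherwise no $k$-uniform tree exists, as $D$ and $R$ would be exhausted), so all the factorials appearing are well-defined inside $\M$.
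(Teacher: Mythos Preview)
Your proof is correct and is essentially the same argument as the paper's. The paper simply observes that the minimal number of leaves is attained when every node branches according to the smaller of the two available sets (yielding fan-outs $n-|\sigma|, n-|\sigma|-1,\dots$), which is exactly what your induction on $k$ spells out in detail.
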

\begin{proof}
The minimal number of paths is achieved for a $\PHPTU{\sigma}$ with nodes labeled by elements of the larger set (initially $[n + 1] \setminus D_{\sigma}$ of size $n + 1 - |\sigma|$) and branching according to elements from the smaller set (initially $[n] \setminus R_{\sigma}$ which has size $n - |\sigma|$).
\end{proof}

\begin{definition}
\label{PHP tree conditions}
We say that a set of conditions $T \in \M$ is \textit{realized by a PHP-tree} if there is a $\PHPT$ $P$ so that $T$ equals the set of labels of leaves of $P$. Similarly, we define realizations by $\PHPTU{\sigma}$, $\PHPTB{\sigma}{\tau}$ and uniform versions of the previous notions.
\end{definition}

\begin{corollary}
\label{size of uniform PHP tree conditions}
If $T \in \M$ is realized by a $k$-uniform $\PHPTU{\sigma}$, then $|T| \geq \frac{(n - |\sigma|)!}{(n - |\sigma| - k)!}$.
\end{corollary}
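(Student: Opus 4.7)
The plan is to reduce the corollary directly to Proposition \ref{size of uniform PHP tree} by observing that $|T|$ equals the number of leaves of the realizing tree, and then invoking the already established lower bound on the leaf count.

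First I would verify that distinct leaves of a $\PHPT$ carry distinct labels. At any internal node $v$ of a $\PHPT$ over sets $D$ and $R$, the outgoing edges are labeled by pairs that either share a fixed first coordinate $a$ with pairwise distinct second coordinates, or share a fixed second coordinate $b$ with pairwise distinct first coordinates; hence edges leaving $v$ carry pairwise distinct pairs. Moreover, by the recursive definition, the subtree attached after the edge labeled $(a,b)$ is a $\PHPT$ over $D \setminus \{a\}$ and $R \setminus \{b\}$, so no later edge on that branch can reuse the pair $(a,b)$. Consequently, two paths from the root that diverge at a node $v$ pick up distinct pairs at $v$, and these distinguishing pairs persist in the label sets of the respective leaves. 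Thus the map sending a leaf to its label is injective.

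Now since $T$ is by definition the set of labels of leaves of some $\PHPTU{\sigma}$ tree $P$, and the leaf-to-label map is injective, $|T|$ equals the number of leaves of $P$. By assumption $P$ is $k$-uniform, so Proposition \ref{size of uniform PHP tree} yields that $P$ has at least $\frac{(n - |\sigma|)!}{(n - |\sigma| - k)!}$ leaves, which gives the claimed bound on $|T|$. I do not expect any real obstacle in this argument, since the combinatorial core is already contained in the previous proposition; the corollary is essentially a repackaging that passes from the number of maximal paths to the cardinality of the realized condition set.
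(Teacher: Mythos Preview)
Your proposal is correct and follows exactly the intended route: the paper states this as an immediate corollary of Proposition~\ref{size of uniform PHP tree} with no separate proof, and your argument simply makes explicit the one missing observation, namely that distinct leaves of a $\PHPT$ carry distinct labels so that $|T|$ coincides with the leaf count. There is nothing more to add.
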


\begin{proposition}
\label{PHP tree conditions properties}
Let $T \in \M$ be realized by a $\PHPTU{\sigma}$. It then holds:
\begin{enumerate}
    \item $\forall \tau, \tau' \in T \: : \: \tau \neq \tau' \to \tau \perp \tau'$;
    \item $\forall \rho \leq \sigma \: \exists \tau \in T \: : \: \tau \| \rho$.
\end{enumerate}
\end{proposition}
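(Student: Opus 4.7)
My plan is to handle item (1) by a direct divergence argument and item (2) by structural induction on the tree $P$ witnessing the realization of $T$, with the induction carried out inside $\M$. In the base case $P$ is a single node, $T = \{\emptyset\}$, and both conclusions are immediate.

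For item (1), let $\tau \neq \tau' \in T$. The corresponding leaves lie at the ends of two distinct root-to-leaf paths which must first split at some internal node $v$. By the definition of a PHP-tree, $v$ either branches on some $a$ (edges labelled $(a,b)$ for all available $b$) or on some $b$ (edges labelled $(a,b)$ for all available $a$). In the first case the two paths continue along edges $(a, b_1)$ and $(a, b_2)$ with $b_1 \neq b_2$, and since every edge label is inherited by the labels of all leaves below it, we have $(a, b_1) \in \tau$ and $(a, b_2) \in \tau'$; the union $\tau \cup \tau'$ is then not a function. The branching-on-$b$ case is symmetric and produces a violation of injectivity. In both subcases $\tau \perp \tau'$.

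For item (2), I would induct on $P$. Suppose the root branches on some $a \in [n+1] \setminus D_\sigma$ (the case of branching on some $b$ is handled by the dual argument), and let $\rho \leq \sigma$. If $a \in D_\rho$, set $b := \rho(a)$; because $\rho \supseteq \sigma$ is an injection and $a \notin D_\sigma$, we have $b \in [n] \setminus R_\sigma$, so the edge $(a,b)$ really is present. The subtree $P_b$ below that edge is a $\PHPTU{\sigma \cup \{(a,b)\}}$, and $\rho \leq \sigma \cup \{(a,b)\}$, so the induction hypothesis yields a leaf label $\tau_0$ of $P_b$ compatible with $\rho$; the corresponding leaf of $P$ carries the label $\{(a,b)\} \cup \tau_0$, still compatible with $\rho$. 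If instead $a \notin D_\rho$, I would pick any $b \in [n] \setminus (R_\sigma \cup R_\rho)$, which is nonempty because $|R_\sigma \cup R_\rho| = |\rho| \leq |n|^c$ for some standard $c$ while $n$ is nonstandard, set $\rho^* := \rho \cup \{(a,b)\}$ (still a condition, extending $\sigma \cup \{(a,b)\}$), and apply the induction hypothesis to $P_b$ with $\rho^*$ to find a leaf label of $P_b$ compatible with $\rho^*$ and hence with $\rho$.

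The only subtle point is the size-bookkeeping in item (2): each recursive step may enlarge $\rho$ by one pair, and for the ``fresh $b$'' (or fresh $a$) choice to keep succeeding I need $|\rho|$ to stay strictly below $n$ throughout the induction. This is automatic in the intended applications of the proposition, where the trees are $k$-uniform with $k \leq |n|^c$ for a standard $c$ and the starting $|\rho|$ is likewise polynomial in $|n|$, so the accumulated enlargement of $\rho$ along any branch remains polynomial and $|\rho|$ stays far below $n$.
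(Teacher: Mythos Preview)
Your argument is correct and is precisely the natural unpacking of the paper's two-line proof, which merely asserts that any two leaf labels are incompatible and that item (2) ``follows from the definitions.'' Your closing caveat is well-placed: the statement tacitly assumes the tree has depth at most $|n|^c$ for some standard $c$ (so that the leaf labels lie in $\P$ and the recursive enlargement of $\rho$ stays well below $n$), and this is indeed the only regime in which the proposition is invoked later in the paper.
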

\begin{proof}
The first property follows from the fact that any two labelings of the leaves in any $\PHPT$ are incompatible. The second property follows from the definitions.
\end{proof}

We now state a lemma which allows us to \textit{glue} together various $PHP$-trees to get a bigger one. The idea is rather simple, every $\PHPT$ can be extended to a deeper one by simply appending another $\PHPT$ to a leaf in the original tree, assuming that the tree we are inserting is compatible with the path leading to such a leaf. Of course, one needs to ensure that the tree we get is definable in $\M$. An immediate corollary is that every $\PHPT$ can be extended to a $k$-uniform one for a suitable $k$.

\begin{lemma}
\label{PHP trees refinement}
Let $P$ be a $\PHPTB{\sigma}{\tau}$. Let $(S_{\lambda})_{\lambda} \in \M$ a collection of $PHP$-trees where $\lambda$ ranges over all labels of the leaves of $P$. For any such $\lambda$ we assume $S_{\lambda}$ is a $\PHPTU{\sigma \cup \lambda}$. 

We define a tree $P \triangleleft S$ as an extension of $P$ by $(S_{\lambda})_{\lambda}$ where we append $S_{\lambda}$ to the leaf of $P$ labeled by $\lambda$. It 

Then, $P \triangleleft S$ is a $\PHPTB{\sigma}{\tau}$ with the property that each maximal path of $P \triangleleft S$ extends a unique maximal path of $P$.
\end{lemma}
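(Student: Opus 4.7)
The plan is to verify, clause by clause, that $P \triangleleft S$ meets the inductive definition of a $PHP$-tree over $[n+1] \setminus D_\sigma$ and $[n] \setminus R_\sigma$, and then to check separately the $\tau$-property and the path-extension claim. The key observation behind the first verification is that if $\lambda$ is the label of a leaf of $P$, then the edges on the root-to-leaf path in $P$ are precisely the pairs of $\lambda$, so the sets ``remaining'' at that leaf are exactly $[n+1] \setminus D_{\sigma \cup \lambda}$ and $[n] \setminus R_{\sigma \cup \lambda}$. These are the sets over which $S_\lambda$ is assumed to be a $PHP$-tree. Reading the inductive definition of a $PHP$-tree downward from the root, every internal node inherited from $P$ satisfies the required clause because $P$ is a $\PHPTU{\sigma}$, while every internal node belonging to an appended $S_\lambda$ satisfies it because $S_\lambda$ is a $\PHPTU{\sigma \cup \lambda}$. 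This yields that $P \triangleleft S$ is a $\PHPTU{\sigma}$.

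For the $\tau$-property, fix $(a, b) \in \tau \setminus \sigma$. Every leaf of $P \triangleleft S$ lies beneath a unique leaf of $P$ labeled by some $\lambda$, so its own label in $P \triangleleft S$ has the form $\lambda \cup \mu$, where $\mu$ is the label it carries when viewed inside $S_\lambda$. Since $P$ is a $\PHPTB{\sigma}{\tau}$, the label $\lambda$ already either contains $(a, b)$ or contains a pair $(a', b)$ together with a pair $(a, b')$ for some $a' \neq a$ and $b' \neq b$; enlarging to $\lambda \cup \mu$ preserves this. The path-extension claim is immediate from the construction: every maximal path of $P \triangleleft S$ decomposes uniquely as a maximal path of $P$ ending at some leaf $\lambda$ followed by a maximal path of $S_\lambda$, and the maximal path of $P$ that it extends is the unique path of $P$ ending at $\lambda$.

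The only point needing any care is definability: we must know $P \triangleleft S$ exists as an element of $\M$. This is handled by the hypothesis that the collection $(S_\lambda)_\lambda$ itself lies in $\M$, since then the ``append $S_\lambda$ at the leaf $\lambda$'' operation is a bounded and definable manipulation of finite trees coded in $\M$. I do not anticipate any serious obstacle here; the proof is essentially a routine structural verification, and its main purpose is to set up the corollary mentioned in the preceding remark, namely that any $PHP$-tree can be extended to a $k$-uniform one by taking the $S_\lambda$ to pad out the short branches to a common depth.
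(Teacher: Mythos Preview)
Your proposal is correct and follows essentially the same approach as the paper's proof: both verify definability from the hypothesis $(S_\lambda)_\lambda \in \M$, argue that the sets remaining at a leaf labeled $\lambda$ are exactly $[n+1]\setminus D_{\sigma\cup\lambda}$ and $[n]\setminus R_{\sigma\cup\lambda}$ so that appending $S_\lambda$ preserves the $PHP$-tree structure, obtain the $\tau$-property because each new leaf label contains the old one, and note the path-extension claim is immediate from the construction. Your write-up is in fact slightly more explicit than the paper's in spelling out the $\lambda\cup\mu$ decomposition and the clause-by-clause structural check.
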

\begin{proof}
$P \triangleleft S$ is definable in $\M$, since both $P$ and $(S_{\lambda})_\lambda$ are in $\M$. 

The fact that each $S_{\lambda}$ is a $\PHPTU{\sigma \cup \lambda}$ implies $P \triangleleft S$ is a $\PHPT$. 

Since $P$ is a $\PHPT$ over $[n + 1] \setminus D_{\sigma}$ and $[n] \setminus R_{\sigma}$ and $S_{\lambda}$ is a $\PHPT$ over $[n + 1] \setminus D_{\sigma \cup \lambda}$ and $[n] \setminus R_{\sigma \cup \lambda}$ it follows that $P \triangleleft S$ is $\PHPT$ over $[n + 1] \setminus D_{\sigma}$ and $[n] \setminus R_{\sigma}$ and thus is a $\PHPTU{\sigma}$. 

Each maximal path of $P \triangleleft S$ extends a unique maximal path of $P$ by the definition of extension. 

Finally, since any label $\pi \in \P$ of a leaf of $P \triangleleft S$ is a superset of the label of a leaf of $P$ it follows that $\pi$ does contain $(a, b)$ or $(a', b)$ and $(a, b')$ with $a \neq a'$ and $b \neq b'$ for all $(a, b) \in \tau \setminus \sigma$. This implies $P \triangleleft S$ is a $\PHPTB{\sigma}{\tau}$.
\end{proof}

In general, one can consider appending $PHP$-trees to leaves of another $\PHPT$ the above assumptions, namely that each $S_{\lambda}$ is a $\PHPTU{\lambda}$ (we are ignoring the initial condition $\sigma$ here). In such a case one needs to ensure that the resulting tree is a $\PHPT$ by first removing all branches of $S_{\lambda}$ which are incompatible with $\lambda$. We then shrink all nodes of the resulting tree which have only a single edge going outwards. 

In the above Lemma \ref{PHP trees refinement} we overcome this complication by assuming that $S_{\lambda}$ already takes into account the label of the leaf it is being appended to which is formally expressed as $S_{\lambda}$ being a $\PHPTU{\lambda}$.

When $P'$ is a tree obtained from a tree $P$ by the above process of appending trees to leaves we say that $P'$ \textit{extends} $P$.

\begin{corollary}
\label{PHP extends to uniform PHP}
For every $k' \geq k \in \M$ with $k' \leq n + 1$ a $\PHPTB{\sigma}{\tau}$ of depth $k$ can be extended to a $k'$-uniform $\PHPTB{\sigma}{\tau}$.
\end{corollary}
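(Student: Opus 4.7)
The idea is to apply Lemma \ref{PHP trees refinement} with a judiciously chosen family $(S_\lambda)_\lambda$, one for each leaf of $P$, designed so that every appended tree fills in exactly the right number of extra levels to reach depth $k'$.

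First I would establish an auxiliary fact by internal induction on $j$ in $\M$: for every $\sigma' \in \P$ and every $j$ with $j + |\sigma'| \leq n$, there exists a $j$-uniform $\PHPTU{\sigma'}$. The base case $j = 0$ is the single-root tree. For the inductive step, pick any $a \in [n+1] \setminus D_{\sigma'}$ (nonempty since $|\sigma'| < n+1$), branch at the root over all $b \in [n] \setminus R_{\sigma'}$, and plug into the child corresponding to the edge labeled $(a,b)$ a $(j-1)$-uniform $\PHPTU{\sigma' \cup \{(a,b)\}}$ supplied by the induction hypothesis. The output is a $\PHPTU{\sigma'}$ all of whose maximal paths have length exactly $j$, and the whole construction is internal to $\M$.

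Now, given the input tree $P$, let $d_\lambda \leq k$ denote the depth of each leaf $\lambda$; note that $\sigma$ and $\lambda$ are compatible and $|\sigma \cup \lambda| = |\sigma| + d_\lambda$. Invoke the auxiliary fact with $\sigma' = \sigma \cup \lambda$ and $j = k' - d_\lambda$ to obtain a $(k' - d_\lambda)$-uniform $\PHPTU{\sigma \cup \lambda}$, which we take as $S_\lambda$. The family $(S_\lambda)_\lambda$ is coded as a single object in $\M$, so Lemma \ref{PHP trees refinement} produces $P \triangleleft S$ as a $\PHPTB{\sigma}{\tau}$ in $\M$. By that lemma, each maximal path of $P \triangleleft S$ extends a unique maximal path of $P$, so its length is $d_\lambda + (k' - d_\lambda) = k'$. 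Hence $P \triangleleft S$ is the desired $k'$-uniform $\PHPTB{\sigma}{\tau}$.

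The main subtlety, rather than a genuine obstacle, is ensuring the construction remains definable in $\M$ when $k$ and $k'$ are non-standard: the induction producing each $S_\lambda$ is an internal induction in $\M$ (available since $\M \models Th(\mathbb{N})$), and the family $(S_\lambda)_\lambda$ must be produced as a single $\M$-coded object so that Lemma \ref{PHP trees refinement} applies. The hypothesis $k' \leq n+1$, together with the convention that all conditions satisfy $|\sigma| \leq |n|^c$, guarantees the side condition $k' - d_\lambda + |\sigma \cup \lambda| = k' + |\sigma| \leq n$ needed to invoke the auxiliary fact, so the induction never runs out of free elements of $[n+1]$ or $[n]$.
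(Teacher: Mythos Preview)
Your approach is exactly what the paper intends: the corollary is stated without proof as an immediate consequence of Lemma~\ref{PHP trees refinement}, and your construction---build a $(k'-d_\lambda)$-uniform $\PHPTU{\sigma\cup\lambda}$ for each leaf $\lambda$ by an internal induction in $\M$, then glue via $P\triangleleft S$---is precisely that argument spelled out in full.

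One minor arithmetic slip worth noting: from $k'\leq n+1$ and $|\sigma|\leq |n|^c$ you cannot conclude $k'+|\sigma|\leq n$ (take $k'=n+1$, $|\sigma|=1$). The side condition your auxiliary fact actually needs is $k'\leq n-|\sigma|$, since the smaller part $[n]\setminus R_{\sigma\cup\lambda}$ runs out after $n-|\sigma|-d_\lambda$ further levels. This is not your fault alone---the paper's own hypothesis $k'\leq n+1$ is already slightly too lax for the same reason---and in every use of the corollary in the paper $k'$ is polylogarithmic in $n$, so the issue never bites.
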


In the next theorem, we identify $PHP$-trees with the labelings of their leaves or, in other words, with the labelings of their maximal paths.
\begin{theorem}
\label{WPHP array lower bounds}
Let $A$ be an \WAs{m}{k} for $\sigma \in \P$. There exists $k' \geq k$ enjoying $k' \leq |n|^c$ for a suitable standard $c$ and a $k'$-uniform \WAs{m}{k'} of size $N \geq 2m \cdot \frac{(n - |\sigma|)!}{(n - |\sigma| - k')!}$.
\end{theorem}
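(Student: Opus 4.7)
The plan is to associate each row of the array $A$ with its own PHP-tree and then to extract a larger, uniform WPHP-array from the leaves of those trees. For each $a \in [2m]$, set $T_a := \bigcup_b A(a,b)$. Properties (1) and (3) of an \WAs{m}{k} immediately give that distinct elements of $T_a$ are pairwise incompatible, while property (4) says exactly that $T_a$ is dense below $\sigma$: every $\rho \leq \sigma$ is compatible with some $\tau \in T_a$. The goal is to build, inside $\M$, a PHP-tree whose every leaf contains a unique element of $T_a$ as a subset, then uniformize and reassemble.

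More concretely, I would construct a $\PHPTU{\sigma}$ $P_a$ such that every leaf label $\lambda$ satisfies $\tau \subseteq \lambda$ for some $\tau \in T_a$. The construction is recursive: at any node with label $\lambda$ not yet containing a member of $T_a$, density supplies some $\tau_\lambda \in T_a$ compatible with $\lambda \cup \sigma$; since $\tau_\lambda \not\subseteq \lambda$, I pick $(a^*, b^*) \in \tau_\lambda \setminus \lambda$ and branch on $a^*$, producing children $\lambda \cup \{(a^*, b')\}$ for all $b' \in [n] \setminus R_{\lambda \cup \sigma}$. Once every $P_a$ is built, Corollary \ref{PHP extends to uniform PHP} extends each of them to a $k'$-uniform $\PHPTU{\sigma}$ $Q_a$, with a single $k' \leq |n|^c$ chosen to dominate the depths of all the $P_a$.

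The new array $A'$ is then defined by placing each leaf $\lambda$ of $Q_a$ into the cell $A'(a,b)$, where $b$ is determined by the unique $\tau_a(\lambda) \in T_a$ satisfying $\tau_a(\lambda) \subseteq \lambda$; property (1) of $A$ ensures this $\tau_a(\lambda)$ lies in exactly one $A(a,b)$. The size lower bound $2m \cdot \frac{(n-|\sigma|)!}{(n-|\sigma|-k')!}$ follows immediately from Proposition \ref{size of uniform PHP tree} applied to each $Q_a$. The four axioms of an $A^{k'}_{m,\sigma}$-array transfer cleanly: property (4) is Proposition \ref{PHP tree conditions properties} for $Q_a$; properties (2) and (3) follow from the pairwise incompatibility of the leaves of $Q_a$ combined with the incompatibility of distinct parents in $T_a$, which passes to their extensions; property (1) is a short check, ruling out same-row overlap via uniqueness of $\tau_a(\lambda)$ and property (1) of $A$, and ruling out same-column overlap across different rows because two parent conditions both lying inside a single $\lambda$ would have to be compatible, contradicting properties (1)--(2) of $A$.

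The step I expect to be the main obstacle is bounding the depth of $P_a$. Along a single path, a ``bad'' child of a branching on $a^*$ is incompatible with the current target $\tau_\lambda$, forcing a switch to a new target, and a naive accounting only yields depth $O(|T_a| \cdot k)$, which can in general be far larger than $|n|^c$ for any standard $c$. To keep $k' \leq |n|^c$ so that the resulting conditions remain in $\P$, the branching strategy has to be refined---for instance, by choosing whether to branch on the domain or range coordinate based on the structure of $T_a$, or by aggregating branchings across many members of $T_a$ so as to amortize the target switches---and a tighter depth bound argued.
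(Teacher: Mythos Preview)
Your overall architecture is exactly the paper's: build a $\PHPTU{\sigma}$ for each row, uniformize, and distribute leaves into columns via the unique $\tau \in T_a$ they contain. Your verification of properties (1)--(4) for $A'$ is also fine and matches the paper's.

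The gap you flag is real, and your proposed fix (branch on one coordinate of one target $\tau_\lambda$ at a time, hope to amortize) is not what the paper does. The paper's resolution is cleaner: instead of branching on a single pair of the current target, at each step and at each leaf $\rho$ it appends a full $\PHPTB{\sigma\cup\rho}{\pi_\rho}$ of depth at most $2k$, i.e.\ a tree that completely decides the chosen $\pi_\rho \in T_a$ compatible with $\rho$. The payoff is an invariant one can actually maintain: after step $i$, every $\pi \in T_a$ compatible with a leaf $\rho$ satisfies $|\pi \cap \rho| \geq i$ (provided $|\pi| \geq i$). The inductive step uses that if $\pi \neq \pi_{\rho_*}$ then $\pi \perp \pi_{\rho_*}$ by property~(3), and since both are compatible with the old leaf $\rho_*$, the incompatibility must be witnessed in $\pi \setminus \rho_*$ versus $\pi_{\rho_*} \setminus \rho_*$; the appended tree decides $\pi_{\rho_*}$, so any new leaf compatible with $\pi$ picks up at least one more pair of $\pi$. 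After $k$ steps every compatible $\pi$ (having size $\leq k$) is fully contained in the leaf, and the total depth is $2k^2$, giving $k' = 2k^2 \leq |n|^{c}$. Your one-pair-at-a-time strategy cannot support this invariant because a ``bad'' child need not intersect any other member of $T_a$ at all; the trick is to decide the whole target condition in one block so that pairwise incompatibility in $T_a$ forces progress against \emph{every} other target simultaneously.
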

\begin{proof}
We construct an \WAs{m}{k'} $A'$ in two stages. Firstly, we specify the contents of each row. Then, we specify how to distribute created content among the columns so that the result is an \WAs{m}{k'}.

Let $A_{a}$ be $\bigcup_{b}A(a, b)$ for $a \in [2m]$. We construct $A'_{a}$ (i.e. the contents of the $a$-th row of $A'$) by the following procedure which takes $k$ steps. 

At first, pick $\tau \in A_{a}$. Note that $\tau \cap \sigma = \emptyset$ and $|\tau| \leq k$, implying existence of a $\PHPTB{\sigma}{\tau}$ of depth $\leq 2k$. Extend such a tree to a uniform $\PHPTU{\sigma}$ $P_1$ of depth exactly $2k$ which is achieved utilizing Corollary \ref{PHP extends to uniform PHP}. 

Note that $P_{1}$ satisfies the following properties:
\begin{itemize}
    \item $\forall \rho \in P_{1} \: \exists \pi \in A_{a} \: : \: \pi \| \rho$;
    \item $\forall \rho \in P_{1} \: \forall \pi \in A_{a} \: : \: \pi \| \rho \to |\pi \cap \rho| \geq 1$.
\end{itemize}

The first property follows from the fact that $A$ is an \WAs{m}{k} and any $\rho$ from $P_{1}$ is compatible with $\sigma$. 

The second property is proved by cases. Assuming $\pi = \tau$ it follows that $\pi || \rho \to \pi \geq \rho$ and so $|\pi \cap \rho| = |\pi| \geq 1$. So let $\pi \neq \tau$. In such a case it follows that $\pi \perp \tau$ and so must contain a pair $(a, b)$ such that $\tau$ contains either $(a', b)$ or $(a, b')$ for $a \neq a'$ and $b \neq b'$. In any case a branch $\rho$ of $P_{1}$ compatible with $\pi$ must also contain $(a, b)$, proving $|\pi \cap \rho| \geq |\{(a, b)\}| = 1$. This finishes the first step of the construction.

At the $i$-th step we are given a uniform $\PHPTU{\sigma}$ $P_{i-1}$ of depth $2(i - 1)k$ satisfying the following properties:
\begin{itemize}
    \item $\forall \rho \in P_{i - 1} \: \exists \pi \in A_{a} \: : \: \pi \| \rho$;
    \item $\forall \rho \in P_{i - 1} \: \forall \pi \in A_{a} \: : \: (\pi \| \rho \land |\pi| \geq (i - 1)) \to |\pi \cap \rho| \geq (i - 1)$.
\end{itemize}

Using the first property of $P_{i - 1}$ pick $\pi_{\rho} \in A_{a}$ for each $\rho \in P_{i - 1}$ so that $\pi_{\rho} \| \rho$. For such $\pi_{\rho}$ create a $\PHPTB{\sigma \cup \rho}{\pi_\rho}$ of depth $\leq 2k$ and extend it to a $2k$-uniform $\PHPTU{\sigma \cup \rho}$ $P_i^\rho$. 

Let $P_{i}$ be $P_{i - 1} \triangleleft \{P_{i}^{\rho} \: | \: \rho \in P_{i - 1}\}$. By Lemma \ref{PHP trees refinement} we see that $P_{i}$ is a $2(i \cdot k)$-uniform $\PHPTU{\sigma}$. 

Clearly:
\begin{itemize}
    \item $\forall \rho \in P_{i} \: \exists \pi \in A_{a} \: : \: \pi \| \rho$.
\end{itemize}

To prove the second desired property let $\rho \in P_{i}$ and $\pi \in A_{a}$ so that $\pi \| \rho$ and $|\pi| \geq i$. Let $\rho_{*}$ be the unique maximal path of $P_{i - 1}$ so that $\rho_{*} \geq \rho$. Note that $\pi \| \rho_{*}$ and so $|\pi \cap \rho| \geq |\pi \cap \rho_{*}| \geq (i - 1)$. Assume $|\pi \cap \rho_{*}| = i - 1$. Let $\pi_{\rho_{*}} \in A_{a}$ be the condition chosen to create $P_{i}^{\rho_{*}}$. 

In case $\pi_{\rho_{*}} = \pi$ it follows that $|\pi \cap \rho| = |\pi| \geq i$. So assume $\pi \neq \pi_{\rho_{*}}$. Then, $\pi \perp \pi_{\rho_{*}}$. But since both $\pi$ and $\pi_{\rho_{*}}$ are compatible with $\rho_{*}$ it follows that $(\pi \setminus \rho_{*}) \perp (\pi_{\rho_{*}} \setminus \rho_{*})$ and so $|(\pi \setminus \rho_{*}) \cap (\rho \setminus \rho_{*})| \geq 1$, implying $|\pi \cap \rho| \geq i$.

So, after the end of the $k$-th step we get a $2k^{2}$-uniform $\PHPTU{\sigma}$ $P_k$ satisfying the following properties:
\begin{itemize}
    \item $\forall \rho \in P_{k} \: \exists \pi \in A_{a} \: : \: \pi \| \rho$;
    \item $\forall \rho \in P_{k} \: \forall \pi \in A_{a} \: : \: \pi \| \rho \to \pi \geq \rho$.
\end{itemize}

By Corollary \ref{size of uniform PHP tree conditions} the number of leaves of $P_{k}$ is bounded from below by $\frac{(n - |\sigma|)!}{(n - |\sigma| - 2k^{2})!}$. 

Note that each $\rho \in P_{k}$ is compatible with $\sigma$ and does not intersect it. Furthermore, by Proposition \ref{PHP tree conditions properties} any two different $\rho$ and $\rho'$ from $P_{k}$ are incompatible, and for any $\pi \leq \sigma$ there is a $\rho \in P_{k}$ compatible with it. This implies, assuming the contents of each row of $A'$ are created by the process described above, the resulting $A'$ satisfies all of the properties of uniform \WAs{m}{2k^2}, assuming the content of each row is correctly distributed among the columns to satisfy property 3 of (\ref{WPHP array}).

The distribution proceeds as follows: for any $\rho \in P_{k}$ pick $\tau \in A_{a}$ so that $\rho \leq \tau$. Note that such $\tau$ is unique and is uniquely placed into one of the columns of $A$. Let $b \in [m]$ so that $\tau \in A(a, b)$. Put $\rho$ into $A'(a, b)$. 

The above gives us a well-defined array $A'$ which, as we have already seen, satisfies all of the properties of the (\ref{WPHP array}) up to the third one. 

Let $b, b' \in [m]$ be different and let $\tau \in A'(a, b), \tau' \in A'(a, b')$. Let $\pi$ and $\pi'$ be conditions from $A(a, b)$ and $A(a, b')$, respectively, so that $\tau \leq \pi$ and $\tau' \leq \pi'$. Since $\pi \perp \pi'$ it follows that $\tau \perp \tau'$. This shows that the third property is satisfied, as well.

Finally, by Proposition \ref{size of WPHP array} the size of the array $A'$ is bounded from below by
\begin{align*}
    2m \cdot \min_{a}|\bigcup_{b}A'(a, b)| \geq 2m \cdot \frac{(n - |\sigma|)!}{(n - |\sigma| - 2k^{2})}!
\end{align*}
with the inequality following from the fact that all the rows of $A'$ are $2k^2$-uniform $PHP^{\sigma}$-trees.
\end{proof}

By a slightly more careful construction, one can get $k'$ from the above proof to be equal to $k^{2} + k$ instead of $2k^{2}$. This, however, does not make a major difference.

\begin{theorem}
\label{WPHP array upper bounds}
Let $A$ be a $k$-uniform \WAs{m}{k} for $\sigma \in \P$. Then, the size of $A$ is bounded from above by $m \cdot \frac{(n + 1 - |\sigma|)!}{(n + 1 - |\sigma| - k)!}$.
\end{theorem}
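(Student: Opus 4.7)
The plan is to reduce to a per-column bound via Proposition \ref{size of WPHP array} and then bound each column by a double-counting argument based on extensions to maximum matchings.

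By Proposition \ref{size of WPHP array}, $N = \sum_{b \in [m]} |B_b|$, where $B_b = \bigcup_a A(a,b)$. Property 1 ensures that the cells in a fixed column are disjoint, so $|B_b| = \sum_a |A(a,b)|$. Property 2 together with $k$-uniformity then guarantees that $B_b$ is a family of pairwise incompatible partial injections, all of size exactly $k$, all compatible with and disjoint from $\sigma$. It therefore suffices to show that $|B_b| \leq (n+1-|\sigma|)!/(n+1-|\sigma|-k)!$ for every $b$; summing over $b \in [m]$ then yields the claim.

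To bound a single column, view each $\tau \in B_b$ as a partial matching of size $k$ in the complete bipartite graph with parts $[n+1] \setminus D_\sigma$ and $[n] \setminus R_\sigma$, of sizes $n+1-|\sigma|$ and $n-|\sigma|$ respectively. A maximum matching in this graph has size $n-|\sigma|$: one chooses the omitted left vertex in one of $n+1-|\sigma|$ ways and then bijectively matches the remaining left vertices with the right, giving $(n+1-|\sigma|)!$ maximum matchings in total. Applying the same count to the residual bipartite graph obtained by deleting the $k$ vertices used by $\tau$ on each side shows that $\tau$ lies in exactly $(n+1-|\sigma|-k)!$ maximum matchings.

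The decisive observation is that pairwise incompatibility forces these extension sets to be disjoint: if a single maximum matching $M$ contained both $\tau$ and $\tau'$ from $B_b$, then $\tau \cup \tau' \subseteq M$ would itself be a partial injection, contradicting $\tau \perp \tau'$. Consequently $|B_b| \cdot (n+1-|\sigma|-k)! \leq (n+1-|\sigma|)!$, which rearranges to the column bound and, after summation, delivers $N \leq m \cdot (n+1-|\sigma|)!/(n+1-|\sigma|-k)!$. I do not anticipate a genuine obstacle: the argument is purely combinatorial, all factorials involved are well-defined in $\M$ as a model of true arithmetic, and the algebraic identities used to count maximum matchings and their extensions go through unchanged in the nonstandard setting.
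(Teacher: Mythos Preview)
Your proposal is correct and follows essentially the same approach as the paper: reduce to a per-column bound via Proposition~\ref{size of WPHP array}, view each column as a family of pairwise-incompatible $k$-matchings in the residual bipartite graph $K_{n+1-|\sigma|,\,n-|\sigma|}$, and bound its size by double counting extensions to maximum matchings (the paper phrases this as an Erd\H{o}s--Ko--Rado-style argument for general $K_{d,c}$, but the content is identical). The only cosmetic differences are that the paper takes $m\cdot\max_b$ rather than summing over $b$, and states the matching bound abstractly for $K_{d,c}$ before specializing.
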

\begin{proof}
The proof is reminiscent of the famous \textit{Erdős–Ko–Rado theorem} as in \cite{katona72}. By Proposition \ref{size of WPHP array} we know that the size of $A$ is bounded from above by $m \cdot \max_{b}|\bigcup_{a}A(a, b)|$.

Let $A^{b}$ denote $\bigcup_{a}A(a, b)$. In particular, each $\tau \in A^{b}$ is of size $k$ and different $\tau, \tau' \in A^{b}$ are incompatible. 

We view each $\tau$ as a $k$-large \textit{matching} of the $K_{n + 1 -  |\sigma|, n - |\sigma|}$ (a \textit{complete bipartite graph} with partitions of sizes $n + 1 - |\sigma|$ and $n - |\sigma|$). We will now prove that for any $k, c, d \in \M$ enjoying $k \leq c \leq d$ it holds that the size of any family $\mathfrak{M}$ of \textit{pairwise-incompatible} $k$-large matchings (i.e. union of any two matchings of $\mathfrak{M}$ is not a matching) of $K_{d, c}$ is bounded from above by $\frac{d!}{(d - k)!}$.

Note that any two different submatchings of some fixed $c$-large matching $M$ of $K_{d, c}$ are compatible and so $\mathfrak{M}$ can contain at most one submatching of such $M$. There are $k! \binom{d}{k} \binom{c}{k}$ different $k$-large matchings of $K_{d, c}$ and each such matching can be extended to $(c - k)! \binom{d - k}{c - k}$ different $c$-large matchings of $K_{d, c}$. Any two $c$-large matchings extending different $k$-large matchings from $\mathfrak{M}$ must necessarily be incompatible and, in particular, different.

It follows that:
\begin{align*}
    |\mathfrak{M}| \leq c! \binom{d}{c} \cdot \frac{1}{(c - k)! \binom{d - k}{c - k}} = \frac{d!}{(d - k)!}.
\end{align*}
\end{proof}

\begin{corollary}
\label{WPHP arrays do not exist}
For any $\sigma \in \P, m \in \M$ and $k \leq |n|^c$ for standard $c$ there does not exist a \WAs{m}{k}. 
\end{corollary}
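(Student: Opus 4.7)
The plan is a direct clash between Theorems \ref{WPHP array lower bounds} and \ref{WPHP array upper bounds}. Assuming for contradiction that some \WAs{m}{k} exists with $k \leq |n|^c$ for a standard $c$, Theorem \ref{WPHP array lower bounds} produces a $k'$-uniform \WAs{m}{k'} of size at least $2m \cdot \frac{(n-|\sigma|)!}{(n-|\sigma|-k')!}$, with $k'$ still bounded by $|n|^{c'}$ for some standard $c'$. Theorem \ref{WPHP array upper bounds} then caps the size of this uniform array from above by $m \cdot \frac{(n+1-|\sigma|)!}{(n+1-|\sigma|-k')!}$.

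Comparing the two bounds and dividing out the common factor $m \cdot \frac{(n-|\sigma|)!}{(n-|\sigma|-k')!}$ (legitimate because both factorial quotients are products of $k'$ consecutive positive integers, so their ratio collapses to $\frac{n+1-|\sigma|}{n+1-|\sigma|-k'}$) reduces the whole comparison to the single inequality $2(n+1-|\sigma|-k') \leq n+1-|\sigma|$, equivalently $n+1-|\sigma| \leq 2k'$. By the definition of $\P$ we have $|\sigma| \leq |n|^c$, and by construction $k' \leq |n|^{c'}$, so the right-hand side is bounded by a standard polynomial in $|n|$, whereas the left-hand side is essentially the non-standard number $n$. This is the desired contradiction.

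I expect no substantial obstacle: once the two preceding theorems are invoked, the argument is a one-line arithmetic collision. The only point deserving a moment's care is that the $k'$ furnished by Theorem \ref{WPHP array lower bounds} is still bounded by a standard power of $|n|$, which is built into that theorem's conclusion, so that the final estimate genuinely places a non-standard quantity below a polylogarithmic-in-$n$ one inside $\M$.
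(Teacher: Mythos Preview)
Your argument is correct and matches the paper's proof essentially line for line: invoke Theorem~\ref{WPHP array lower bounds} to pass to a $k'$-uniform array with the stated lower bound, apply Theorem~\ref{WPHP array upper bounds} for the upper bound, divide through to obtain $2 \leq \frac{n+1-|\sigma|}{n+1-|\sigma|-k'}$, and note this is absurd because $|\sigma|$ and $k'$ are polylogarithmic in $n$.
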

\begin{proof}
Assume $A$ is a \WAs{m}{k}. By Theorem \ref{WPHP array lower bounds} we can assume it is $k$-uniform and has size $N \geq 2m \cdot \frac{(n - |\sigma|)!}{(n - |\sigma| - k)!}$. 

By Theorem \ref{WPHP array upper bounds} it follows that $N \leq m \cdot \frac{(n + 1 - |\sigma|)!}{(n + 1 - |\sigma| - k)!}$. 

Combining both lower and upper bounds we derive:
\begin{align*}
    2 \leq \frac{n + 1 -|\sigma|}{n + 1 - |\sigma| - k},
\end{align*}
which is a contradiction, since both $|\sigma|$ and $k$ are  bounded by $|n|^{c'}$ for a suitable standard $c$.
\end{proof}

To get the desired contradiction assuming that $(\I, R_G) \nvDash \WPHPD$ we prove the following and key theorem.

\begin{theorem}
\label{WPHP non-standard array exists}
Assume $\emptyset \nVdash \WPHPD$ for $m \in \I$. Then, there exists an \WAs{m}{k} for some $\sigma \in \P$ and $k \leq |n|^c$ for a suitable standard $c$.
\end{theorem}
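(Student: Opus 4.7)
The plan is to build the array directly from the failure of forcing, in three steps: extract a base condition $\sigma$ from the assumption, define the cells $A(a,b)$ via the polynomial-time decision procedure for the $\DR$-formula $\phi(x,y)$, and verify the four properties of Definition~\ref{WPHP array}.

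From $\emptyset \nVdash WPHP^{2m}_m(\phi)$ I would first exhibit $\sigma \in \P$ satisfying: (a) for every $\rho \leq \sigma$ and every $a \in [2m]$ there exist $\rho' \leq \rho$ and $b \in [m]$ with $\rho' \Vdash \phi(a,b)$; and (b) no $\tau \leq \sigma$ forces $\phi(a,b) \land \phi(a',b)$ for any $a \neq a'$, nor $\phi(a,b) \land \phi(a,b')$ for any $b \neq b'$. Morally, $\sigma$ forces $\neg WPHP^{2m}_m(\phi)$. I would construct it by iteratively strengthening $\emptyset$, each time eliminating a local violation of (a) or (b): a $\tau$ violating (b) already forces one of the ``collision'' disjuncts of WPHP, while an $a$ for which (a) persistently fails forces the ``no-image'' disjunct, so if no suitable $\sigma$ existed a density argument would conclude $\emptyset \Vdash WPHP^{2m}_m(\phi)$. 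With $\sigma$ in hand, I would associate to each $(a,b)$ the decision tree computing $\phi(a,b)$ --- the polynomial-time algorithm witnessing $\phi \in \DR$ querying $R$ and branching on the answer, while respecting injectivity at the branchings --- and set $A(a,b)$ to be the collection of accepting paths, read as conditions in $\P$ of size $\leq |n|^c$, restricted to those disjoint from $\sigma$. Accepting paths of a single tree are pairwise incompatible by construction.

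Verification should then be routine. Property~1 (cells sharing a row or column are disjoint) follows from (b), since a shared $\tau$ would make $\sigma \cup \tau$ force a forbidden conjunction; within-cell incompatibility in property~2 is the tree observation, and across different rows of the same column, compatible witnesses $\tau, \tau'$ would give $\sigma \cup \tau \cup \tau' \Vdash \phi(a,b) \land \phi(a',b)$, contradicting (b); property~3 is symmetric; property~4 is immediate from (a) by extracting the trace of an accepting path from $\rho'$ and checking it is compatible with $\rho$.

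The main obstacle is Step 1. The forcing equivalences of Section~3 are stated for sharply bounded formulas, whereas the quantifiers of $WPHP^{2m}_m$ range over $[2m]$ and $[m]$ with $m \in \I$, not over sharply bounded bounds. I expect to handle this by exploiting that a $\DR$-formula $\phi(a,b)$ at fixed concrete parameters is decided by a $|n|^c$-bit portion of the oracle, giving it forcing behaviour analogous to the sharply bounded case and allowing Lemmas~\ref{forcing negation}--\ref{forcing existential quantifier} to be applied on a per-$(a,b)$ basis. A secondary delicacy is canonicalising the decision procedure in Step 2 so that distinct accepting paths genuinely disagree at some queried pair, which is what makes the within-cell incompatibility in property~2 work.
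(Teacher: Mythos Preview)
Your plan matches the paper's proof: extract $\sigma$ by a density argument, build $A(a,b)$ from the accepting branches of the oracle decision tree for $\phi(a,b)$, and verify the four properties from the choice of $\sigma$. Two points where the paper is sharper than your sketch: for Step~1 the paper avoids any iteration by defining $D$ as the set of conditions forcing some disjunct of $WPHP^{2m}_m(\phi)$, showing $D$ is $\P$-definable by writing $\phi=\exists\overline{u}\le t\,\psi$ with $\psi$ sharply bounded and invoking Proposition~\ref{definability forcing}, and then taking $\sigma$ to witness non-density of $D$ (this dissolves your stated obstacle); for Step~2 the paper converts the Boolean oracle tree into a $PHP^{\sigma}$-tree by replacing each query ``$R(u,v)?$'' with the functional query ``$u\mapsto{?}$'' and hardwiring $\sigma$'s values, so that every branch is automatically a condition disjoint from and compatible with $\sigma$---cleaner than ``restricting to those disjoint from $\sigma$'', which as written could be read as discarding branches.
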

\begin{proof}

Let $\phi(x, y)$ be $\DR$-formula with parameters from $\I$ so that $\emptyset$ does not force $WPHP_{m}^{2m}(\phi(x, y))$.

Let $D \subseteq \P$ contain such $\rho \in \P$ that one of the below holds:
\begin{itemize}
    \item $\exists a \neq a' \in [2m] \: \exists b \in [m] \: : \: \rho \Vdash \phi(a, b) \land \phi(a', b)$;
    \item $\exists a \in [2m] \: \exists b \neq b' \in [m] \: : \: \rho \Vdash \phi(a, b) \land \phi(a, b')$;
    \item $\exists a \in [2m] \: \forall b \in [m] \: : \: \rho \Vdash \neg \phi(a, b)$.
\end{itemize}
We want to show that $D$ is $\P$-definable. Since $\phi(x, y)$ is a $\DR$-formula and $\DR \subseteq \Sigma^b_1(R)$ we can assume $\phi(x, y)$ is of the form:
\begin{align*}
    \exists \overline{u} \leq t(x, y) \: \psi(x, y, \overline{u}),
\end{align*}
where $t$ is an $L$-term with parameters from $\I$ and $\psi(x, y, \overline{u})$ is sharply bounded. It is now enough to use Theorem \ref{forcing properties} to show $\P$-definability of $D$ by writing it as a union of three sets, one for each condition defining $D$, and then applying different parts of (\ref{forcing properties}) depending on the structure of $\phi(x, y)$.

As an example, we show that the set of all conditions forcing $\phi(a, b)$ for particular $a, b \in \mathbb{I}$ is $\P$-definable. By (\ref{forcing properties}, 7) we get:
\begin{align*}
    \rho \Vdash \exists \overline{u} \leq  t(a, b) \: \psi(a, b, \overline{u})
\end{align*}
iff
\begin{align*}
    \forall \tau \leq \rho \: \exists \pi \leq \tau \: \exists \overline{u} \leq t(a, b) \: : \: \pi \Vdash \psi(a, b, \overline{u}).
\end{align*}
So it is enough to show $\P$-definability of the set of all conditions $\pi$ forcing $\psi(a, b, \overline{u})$. This follows immediately from Proposition \ref{definability forcing}. 

It is clear that any condition in $D$ forces $\WPHPR[m][\phi(x, y)]$. Thus, according to the assumptions, $D$ is not dense. Let $\sigma \in \P$ be a condition not extendable to any condition from $D$.

As a next step, we define an array $A \in \M$ which should satisfy the properties of the \WAs{m}{|n|^c} for a standard $c$. We start by writing the formula $\phi(x, y)$ as $\phi'(x, y, \overline{c})$ so that $\overline{c} \in \I$ and $\phi'(x, y, \overline{z})$ is a $\DR$-formula without parameters.

We shall use the following statement that follows readily
from well-known facts.
\begin{fact}
    There is standard $c \geq 1$ depending just on $\phi'$ and
    parameters $\overline{c}$ and $m$ from $\I$ such that for any $a \in [2m], b \in [m]$ there is a depth $|n|^c$ $\PHPTU{\sigma}$ $T_{a,b} \in \M$ that decides the truth value of $\phi(a,b)$ in $(\I,R_G)$.
\end{fact}

The quickest (although not the most elementary) way to see this 
is to use the already mentioned fact that $\phi'(x, y, \overline{z})$ defines a relation computable by a standard polynomial-time Turing machine $U$ definable in $S^1_2(R)$ and querying oracle about relation $R$ (see \cite[Sections 6 and 7]{krajicek95}). When $\overline{c}
$ and $m$ from $\I$ (i.e. of bit-size polynomial in $|n|$) are fixed and inputs $a,b$ are bounded by fixed $m$, the running time of $U$ is bounded by $|n|^c$ for some standard $c$ and this number bounds also the number of queries $U$ makes on any input. We already know that for generic $G$ the structure $(\I, R_G)$ is a model of $T^1_2(R) \vdash S^1_2(R)$ and so $U$ works there as well.

Given $a,b$, the computation develops according to oracle answers and all possible computations can be represented by a decision tree $T'_{a,b}$ of depth $\leq |n|^c$ that asks about truth values of atomic sentences $R(u,v)$, for some $u \in [n+1]$ and $ v \in [n]$.
 
To represent this by a $\PHPT$ $T_{a,b}$ we simply replace query ``$R(u,v)?$" in $T'_{a,b}$ by a $PHP$-query ``$u \mapsto ?$" (i.e. query asking for a $w$ satisfying $R(u, w)$); if the answer is $v$, then the original query in $T'_{a,b}$ is answered ``yes", in all other cases it is answered ``no". Because we know that the generic $R_G$ violates $\OPHPR$, we can delete from $T_{a,b}$ all paths that are not partial one-to-one maps, getting the desired tree $T_{a,b}$.

As before, we identify $T_{a,b}$ with its maximal paths which, in turn, are represented by partial one-to-one maps between $[n+1]$ and $[n]$ of size $\leq |n|^c$, i.e. as elements of $\P$. Let $T^+_{a,b}$ be all such maps for which the corresponding maximal paths in $T_{a, b}$ lead to acceptance of $\phi(a, b)$.

We define $A$ as $A(a, b) = T^+_{a, b}$. Since $T_{a, b}$ is a $\PHPTU{\sigma}$ two different conditions from $A(a, b)$ are incompatible and all such conditions are compatible with $\sigma$ while their intersections with $\sigma$ are empty.

It is now enough to show that $A$ satisfies all four properties of (\ref{WPHP array}). The first three properties follow immediately from the fact that $\sigma$ is not extendable to a condition which either forces $\phi(a, b) \land \phi(a', b)$ or $\phi(a, b) \land \phi(a, b')$ for $a \neq a'$ and $b \neq b'$. 

Assume the fourth property is not satisfied, i.e.:
\begin{align*}
    \exists \rho \leq \sigma \: \exists a \in [2m] \: \forall b \in [m] \: \forall \tau \in A(a, b) \: : \: \tau \perp \rho.
\end{align*}
It is enough to show $\rho \Vdash \neg \phi(a, b)$ for $a$ as above and all $b \in [m]$ to derive a contradiction. Since $\phi(a, b)$ is $\exists \overline{u} \leq t(a, b) \: \psi(a, b, \overline{u})$ for a sharply bounded $\psi(a, b, \overline{u})$, we get:
\begin{align*}
    \rho \Vdash \neg \phi(a, b) \: \text{ iff } \: \rho \Vdash \forall \overline{u} \leq t(a, b) \: \neg \psi(a, b, \overline{u}),
\end{align*}
where by (\ref{forcing properties}, 4) the latter is equivalent to:
\begin{align*}
    \forall \overline{u} \leq t(a, b) \: \rho \Vdash \neg \psi(a, b, \overline{u}).
\end{align*}

Since $\psi$ is sharply bounded we use (\ref{forcing properties}, 7) to derive:
\begin{align*}
    \rho \Vdash \neg \psi(a, b, \overline{u}) \: \text{ iff } \: \forall \pi \leq \rho \: \pi \nVdash \psi(a, b, \overline{u}).
\end{align*}

Pick any $\pi$ extending $\rho$ and assume it forces $\psi(a, b, \overline{u})$. Thus, $\pi \Vdash \phi(a, b)$ which implies $\pi$ is compatible with some $\tau \in A(a, b)$. However, this implies $\rho \| \tau$ since $\pi \leq \rho$ contradicting our initial assumption that $\rho$ is incompatible with every $\tau$ from $A(a, b)$ for every $b \in [m]$.

\end{proof}

\begin{corollary}
\label{empty forces WPHP}
\begin{align*}
    \emptyset \Vdash \WPHPD,
\end{align*}
implying that for any generic filter $G$ it holds that:
\begin{align*}
    (\I, R_{G}) \vDash \TR + \forall m \WPHPD + \exists n \neg \OPHPR.
\end{align*}
\end{corollary}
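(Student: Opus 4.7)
The proof is a direct synthesis of the two principal results of this section. The plan is to derive $\emptyset \Vdash \WPHPD$ by contradiction, using Theorem \ref{WPHP non-standard array exists} and Corollary \ref{WPHP arrays do not exist}, and then read off the model-theoretic consequence from the definition of forcing together with Corollary \ref{generic model}.

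First I would fix any $m \in \I$ and any $\DR$-formula $\phi(x,y)$ (with parameters allowed from $\I$), and suppose for contradiction that $\emptyset \nVdash \WPHPR[m][\phi(x,y)]$. Theorem \ref{WPHP non-standard array exists} then produces a condition $\sigma \in \P$, a standard $c$, and an $(m, k, \sigma)$-WPHP-array with $k \leq |n|^c$. But Corollary \ref{WPHP arrays do not exist} rules out the existence of any such array. The contradiction shows $\emptyset \Vdash \WPHPR[m][\phi(x,y)]$, and since $m$ and $\phi$ were arbitrary, $\emptyset$ forces every instance of the scheme $\WPHPD$.

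For the model-theoretic part I would fix any generic filter $G$, whose existence is guaranteed by Proposition \ref{generic filter exists}. Because $\emptyset$ belongs to every filter, Definition \ref{forcing} gives $(\I, R_G) \vDash \WPHPR[m][\phi(x,y)]$ for each $m \in \I$ and each $\DR$-formula $\phi$; since these are precisely the instances making up $\forall m\, \WPHPD$ in the universe $\I$, we obtain $(\I, R_G) \vDash \forall m\, \WPHPD$. Corollary \ref{generic model} then supplies the remaining conjuncts $\TR + \exists n\, \neg \OPHPR$ for the same $G$.

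At this point the corollary is really bookkeeping, with all the real work absorbed into Theorem \ref{WPHP non-standard array exists} (extracting a WPHP-array from a failure of forcing) and the Erdős–Ko–Rado style counting behind Corollary \ref{WPHP arrays do not exist}; so no significant obstacle remains. The only small subtlety to watch for is that the uniformity in $m$ and in the $\I$-parameters of $\phi$ is genuine, but this is clear because the contradiction above is produced one $(m,\phi)$ at a time and satisfaction in $(\I, R_G)$ is evaluated instance by instance.
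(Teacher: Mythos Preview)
Your proposal is correct and matches the paper's intended argument exactly: the paper states this corollary without a separate proof, having already outlined at the start of Section~\ref{section WPHP arrays} precisely the contradiction you describe between Theorem~\ref{WPHP non-standard array exists} and Corollary~\ref{WPHP arrays do not exist}, with Corollary~\ref{generic model} supplying the remaining conjuncts. There is nothing to add.
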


As an application of our proof methods, we derive the following result already advertised in the abstract.

\begin{theorem}
\label{ajtai}
There exists a model of:
\begin{align*}
    T_{2}^{1}(R) + \exists n (\forall m \leq n^{1 - \epsilon} (PHP_{m}^{m + 1}(\DR)) + \neg PHP_{n}^{n + 1}(R)),
\end{align*}
where $\epsilon$ is an arbitrarily small standard rational.
\end{theorem}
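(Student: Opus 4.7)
My plan is to adapt the combinatorial machinery of Section \ref{section WPHP arrays}, replacing $[2m]$ by $[m+1]$ throughout and tracking the numerical bounds more carefully. The forcing set-up, the genericity argument, the proof that $\emptyset \Vdash \TR$, and the observation that $R_G$ is a bijection between $[n+1]$ and $[n]$ in $(\I, R_G)$ (so that every condition forces $\neg \PHPR{n+1}{n}$) all transfer unchanged. What remains is to show $\emptyset \Vdash \forall m \leq n^{1-\epsilon}\, \PHPD{m+1}{m}$.

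First I would introduce an $(m,k,\sigma)$-$PHP$-array analogously to Definition \ref{WPHP array} but indexed by $[m+1] \times [m]$; properties 1--4 retain their formulations verbatim. The proof of Theorem \ref{WPHP array lower bounds} proceeds row-by-row and is insensitive to the number of rows, so it delivers a $k'$-uniform $PHP$-array with $k' = O(k^2) \leq |n|^{c'}$ and size at least $(m+1) \cdot \frac{(n - |\sigma|)!}{(n - |\sigma| - k')!}$. The Erd\H{o}s--Ko--Rado-style proof of Theorem \ref{WPHP array upper bounds} depends only on column structure and still yields an upper bound of $m \cdot \frac{(n+1-|\sigma|)!}{(n+1-|\sigma|-k')!}$.

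Combining the two inequalities as in Corollary \ref{WPHP arrays do not exist} reduces to the scalar inequality $n+1-|\sigma| \leq (m+1)\, k'$. Since $|\sigma|,\, k' \leq |n|^{c''}$ for a standard $c''$ depending only on $\phi$, and since $|n|^{c''}$ is dominated by any positive standard power of $n$ once $n$ is nonstandard, for $m \leq n^{1-\epsilon}$ the right-hand side is at most $(n^{1-\epsilon}+1)\, |n|^{c''} = o(n)$, which contradicts $n+1-|\sigma| = n - o(n)$. Hence no such $PHP$-array can exist whenever $m \leq n^{1-\epsilon}$.

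Finally, the analogue of Theorem \ref{WPHP non-standard array exists} will go through essentially line-by-line with $[2m]$ replaced by $[m+1]$ (the three disjuncts of $\PHPR{m+1}{m}$ match the three clauses defining the dense set $D$ in that proof): failure of $\emptyset \Vdash PHP^{m+1}_m(\phi)$ for some $\DR$-formula $\phi$ and some $m \leq n^{1-\epsilon}$ produces a condition $\sigma$ and a $PHP$-tree-based array $A(a,b) = T^+_{a,b}$ satisfying properties 1--4, which the previous paragraph excludes. Thus $\emptyset \Vdash PHP^{m+1}_m(\phi)$ for every $\DR$-formula $\phi$ and every $m \in \I$ with $m \leq n^{1-\epsilon}$, and clause 4 of Theorem \ref{forcing properties} delivers the desired $\forall m$-statement in any generic model. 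The main point to monitor is the interplay between $\epsilon$ and the tree-depth $k'$: an arbitrary standard $\epsilon > 0$ works because $k'$ is polylogarithmic in $n$ and so is absorbed by any positive power $n^{\epsilon}$, whereas the method cannot reach $m$ close to $n$ since some gap between $(m+1)k'$ and $n$ is required.
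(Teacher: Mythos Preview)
Your proposal is correct and follows essentially the same approach as the paper: replace the $[2m]\times[m]$ index set by $[m+1]\times[m]$, rerun Theorems~\ref{WPHP array lower bounds}, \ref{WPHP array upper bounds}, and \ref{WPHP non-standard array exists} verbatim, and observe that the resulting ratio inequality $\frac{m+1}{m}\leq\frac{n+1-|\sigma|}{\,n+1-|\sigma|-k'}$ is impossible for $m\leq n^{1-\epsilon}$ and polylogarithmic $k',|\sigma|$. Your explicit rearrangement to $n+1-|\sigma|\leq(m+1)k'$ and your treatment of general $m\leq n^{1-\epsilon}$ (rather than just $m=n^{1-\epsilon}$) are slightly more detailed than the paper's terse version, but the argument is the same.
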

\begin{proof}
The model is actually the same $(\I, R_G)$ as in \ref{empty forces WPHP}. 

Assuming: 
\begin{align*}
    \emptyset \nVdash \forall m \leq n^{1 - \epsilon} (PHP_{m}^{m + 1}(\DR))   
\end{align*}
it follows that one can construct $(n^{1 - \epsilon}, |n|^{c}, \sigma)$-$PHP$-$array$ defined analogously as in \ref{WPHP array} with the only difference being that the indexing set is $[n^{1 - \epsilon} + 1] \times [n^{1 - \epsilon}]$.

Theorems \ref{WPHP array lower bounds} and \ref{WPHP array upper bounds} still apply implying:
\begin{align*}
    (n^{1 - \epsilon} + 1) \cdot \frac{(n - |\sigma|)!}{(n - |\sigma| - |n|^{c})!} \leq n^{1 - \epsilon} \cdot \frac{(n + 1 - |\sigma|)!}{(n + 1 - |\sigma| - |n|^{c})!},
\end{align*}

The above results in:
\begin{align*}
    \frac{n^{1 - \epsilon} + 1 }{n^{1 - \epsilon}} \leq \frac{n + 1 - |\sigma|}{n + 1 - |\sigma| - |n|^{c}},
\end{align*}
which is a contradiction, since $|\sigma| \leq |n|^{c'}$ and $c', c, \epsilon$ are standard.
\end{proof}

\section*{Acknowledgements}
The work first appeared in \cite{narusevych22} supervised by Jan Krajíček. I am indebted to my supervisor for detailed comments and suggestions.

\bibliographystyle{plain}
\bibliography{main}

\end{document}